\documentclass[11pt]{article}
\usepackage[T1]{fontenc}
\usepackage{lmodern}
\usepackage{microtype}
\usepackage{amsmath,amssymb,amsthm,mathtools}
\usepackage{enumitem}
\usepackage[a4paper,margin=1in]{geometry}
\usepackage{fancyhdr}
\usepackage[hidelinks]{hyperref}

\newtheorem{theorem}{Theorem}[section]
\newtheorem{proposition}[theorem]{Proposition}
\newtheorem{lemma}[theorem]{Lemma}
\newtheorem{corollary}[theorem]{Corollary}
\theoremstyle{definition}
\newtheorem{definition}[theorem]{Definition}
\newtheorem{example}[theorem]{Example}
\theoremstyle{remark}
\newtheorem{remark}[theorem]{Remark}

\newcommand{\DesR}{\operatorname{Des}_{R}}
\newcommand{\desR}{\operatorname{des}_{R}}
\newcommand{\hocolim}{\operatorname*{hocolim}}
\newcommand{\sd}{\operatorname{sd}}
\newcommand{\Ind}{\operatorname{Ind}}

\newcommand{\cat}{\operatorname{cat}}
\newcommand{\wtH}{\widetilde H}
\newcommand{\wtC}{\widetilde C}
\newcommand{\wtP}{\widetilde P}
\newcommand{\Z}{\mathbb Z}
\newcommand{\C}{\mathbb C}
\newcommand{\R}{\mathbb R}
\newcommand{\bbk}{\Bbbk}
\newcommand{\BK}{\mathcal B}

\newcommand{\XK}{X_{\mathcal K}}
\newcommand{\cW}{C_W}

\title{\textbf{Parabolic Homotopy Colimits and Coxeter Descents}}
\author{Yifan Zhang\\[1mm]
\small Department of Algebra, Faculty of Mathematics and Physics, Charles University,\\
\small Sokolovsk\'a 49/83, 186 75 Praha 8, Czech Republic\\
\small Department of Applied Mathematics,\\
\small Faculty of Electrical Engineering and Computer Science, VSB--Technical University of Ostrava,\\
\small 17. listopadu 2172/15, 708 00 Ostrava-Poruba, Czech Republic\\
\small Department of Mathematics, Faculty of Science, University of Ostrava,\\
\small 30. dubna 22, 702 00 Ostrava, Czech Republic\\
\small \texttt{yifan.zhang@osu.cz}}
\date{}

\begin{document}
\maketitle

\begin{abstract}
Let $G$ be a compact, connected, simply connected semisimple Lie group with Weyl group $W$ and simple reflections $S$. For a simplicial complex $\mathcal K$ on $S$, form the homotopy colimit $X_{\mathcal K}(G)=\operatorname*{hocolim}_{I\in\mathcal K}G/G_I$ of standard partial flag manifolds. We compute its integral homology. If $\operatorname{Des}_R(w)$ is the right descent set of $w\in W$ and $\ell(w)$ its Coxeter length, then
$$
H_n(X_{\mathcal K}(G);\mathbb Z)\cong
\bigoplus_{w\in W}\widetilde H_{n-2\ell(w)-1}(\mathcal K_{\operatorname{Des}_R(w)};\mathbb Z).
$$
Thus the induced subcomplexes of $\mathcal K$ supply the topological data, while the Weyl group determines which subcomplex occurs and the Schubert-degree shift. The proof gives a chain-level splitting and an integral Morse reduction. We derive homotopy detection, duality and rigidity results, and recover polyhedral products, matroid--Tutte formulas, and the adjoint sphere as special cases.
\end{abstract}

\noindent\textbf{2020 Mathematics Subject Classification.} 55U10, 55P10, 05E16, 13F55; secondary 20F55, 57S15.\\
\textbf{Keywords.} Coxeter group; descent set; homotopy colimit; Hochster formula; Stanley--Reisner ring; algebraic discrete Morse theory; polyhedral product; Schubert cell.

\section{Introduction}
Partial flag manifolds come with many natural projection maps, and homotopy colimits provide a way to glue these spaces together.  The purpose of this paper is to compute the topology of a family of such gluings and to show that the answer is controlled by a simple combination of two kinds of data: induced subcomplexes of a simplicial complex and descent sets in a Weyl group.

Let $G$ be a compact, connected, simply connected semisimple Lie group.  Choose a maximal torus and a set $S$ of simple reflections for its Weyl group $W$.  For each subset $I\subseteq S$ there is a standard parabolic subgroup $G_I\subseteq G$ and a partial flag manifold $G/G_I$.  If $I\subset J$, there is a canonical projection $G/G_I\to G/G_J$. Now let $\mathcal K$ be a simplicial complex on the vertex set $S$. Keeping only those partial flag manifolds for which $I$ is a face of $\mathcal K$ gives the diagram $I\mapsto G/G_I$ for $I\in\mathcal K$, and we study its homotopy colimit
\begin{equation}\label{eq:introXK}
  X_{\mathcal K}(G):=\hocolim_{I\in\mathcal K}G/G_I.
\end{equation}
The basic question is:

\begin{quote}
How does the topology of $X_{\mathcal K}(G)$ depend on the combinatorics of $\mathcal K$ and on the Weyl group of $G$?
\end{quote}

This family contains several familiar spaces.  If $\mathcal K=2^J$ is a simplex, then $J$ is terminal in the indexing category and $X_{2^J}(G)\simeq G/G_J$.
If $G$ is simple and $\mathcal K=\partial\Delta^S$ is the boundary of the simplex on $S$, Castellana and Kitchloo identify the same homotopy-colimit model with the unit adjoint sphere $S(\mathfrak g)$ \cite[Theorem~1.1]{CK}.  At the opposite end of the Lie-theoretic spectrum, when $G=(SU(2))^r$, the spaces $X_{\mathcal K}(G)$ are the familiar polyhedral products $(D^3,S^2)^{\mathcal K}$ up to homotopy; see Theorem~\ref{thm:polyprod}.  Thus \eqref{eq:introXK} interpolates between partial flag manifolds, polyhedral products, and the proper-parabolic adjoint-sphere construction.

\paragraph{The main result in one sentence.}
For each $w\in W$, its right descent set $\DesR(w)=\{s\in S:\ell(ws)<\ell(w)\}$ selects the induced subcomplex $\mathcal K_{\DesR(w)}$, and the homology of this induced subcomplex contributes to $H_*(X_{\mathcal K}(G))$ after a degree shift by the real dimension $2\ell(w)$ of the Schubert cell indexed by $w$.

Here $\mathcal K_D$ denotes the induced subcomplex on $D\subseteq S$.  The precise statement is as follows; the basic Coxeter terminology is reviewed in Section~\ref{sec:background}.

\begin{theorem}[Main theorem: descent--Hochster decomposition]\label{thm:intro}
Let $G$ be as above, with Weyl system $(W,S)$, and let $\mathcal K\subseteq2^S$ be a simplicial complex.  Then
\begin{equation}\label{eq:intromain}
  H_n(X_{\mathcal K}(G);\Z)
  \cong
  \bigoplus_{w\in W}
  \wtH_{n-2\ell(w)-1}
  (\mathcal K_{\DesR(w)};\Z),
\end{equation}
where augmented reduced homology is used, with
$\wtH_{-1}(\{\varnothing\};\Z)=\Z$.

More precisely, the cellular chain complex of $X_{\mathcal K}(G)$ splits by Weyl-group element, and the summand indexed by $w$ admits an explicit integral Morse reduction to $\Sigma^{2\ell(w)+1}\wtC_*(\sd\mathcal K_{\DesR(w)};\Z)$. The same chain-level statement is defined for every finite Coxeter system, whether or not it comes from a Lie group.
\end{theorem}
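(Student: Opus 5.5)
We model the homotopy colimit by the Bousfield--Kan/simplicial replacement over the face poset of $\mathcal K$: $X_{\mathcal K}(G)$ is the realization of the space whose $p$-simplices are $\coprod_{I_0\subsetneq\cdots\subsetneq I_p}G/G_{I_0}$, with faces induced by the projections $G/G_I\to G/G_J$. Each $G/G_I$ has the Bruhat CW structure with cells $BwG_I/G_I$ indexed by minimal coset representatives $w\in W^I$, which are exactly the $w$ with $I\cap\DesR(w)=\varnothing$. These cells have real dimension $2\ell(w)$ and vanishing cellular differential over $\Z$, since all cells are even-dimensional. The projections $G/G_I\to G/G_J$ are cellular. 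We then obtain a product cell structure on the hocolim whose cells are pairs (chain $I_0\subsetneq\cdots\subsetneq I_p$ in $\mathcal K$, $u\in W^{I_0}$), of dimension $2\ell(u)+p$.

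The first step is the splitting by Weyl group elements. Under $G/G_I\to G/G_J$, the Schubert cell of $u\in W^I$ maps onto the cell of its projection $u^J\in W^J$. It maps homeomorphically if $u=u^J$, and to a lower-dimensional cell otherwise, so on cellular chains the induced map sends $[u]\mapsto[u]$ if $u\in W^J$ and $[u]\mapsto 0$ otherwise. Hence the cellular differential, which is the alternating sum of simplicial faces with zero internal differential, never changes $u$. The complex therefore splits as $\bigoplus_{w\in W}C^{(w)}_*$, where $C^{(w)}$ is spanned by chains $\sigma=(I_0\subsetneq\cdots\subsetneq I_p)$ with $I_0\cap\DesR(w)=\varnothing$. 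In this complex the top face $d_p$ (drop $I_p$) always survives. The bottom face $d_0$ (drop $I_0$) survives only if $w\in W^{I_1}$, that is, $I_1\cap D=\varnothing$ with $D=\DesR(w)$; otherwise it becomes zero. Thus $C^{(w)}$ is, up to the shift $2\ell(w)$, the chain complex of the poset pair of chains in $\mathcal K$ whose minimum avoids $D$, relative to the subcomplex of those with $I_1$ also avoiding $D$. Equivalently, it is a relative complex of order complexes $\Delta(\mathcal K)$ modulo the part where the minimum misses $D$.

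The second and main step is to identify $C^{(w)}$ with $\Sigma^{2\ell(w)+1}\wtC_*(\sd\mathcal K_D)$ by an explicit algebraic discrete Morse matching. We match each chain $\sigma$ with $\sigma\cup\{I_0\cap(S\setminus D)\ \text{or}\ I_0\setminus\ldots\}$. Concretely, we first try the acyclic matching that pairs $\sigma$ with the chain obtained by inserting or removing $J\cap D$ at the appropriate position, where $J$ is the first face of $\sigma$ meeting $D$ nontrivially. The matched coefficients are $\pm1$, so the reduction is integral, and acyclicity follows from a standard lexicographic/height argument. The critical cells should be chains of the form $\varnothing\subsetneq J_1\subsetneq\cdots\subsetneq J_p$ with all $J_i\subseteq D$ (possibly with the empty face as minimum), which are faces of $\mathcal K_D$. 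These correspond bijectively to simplices of $\sd\mathcal K_D$ together with the augmentation, with degree shifted by one, coming from $\varnothing$ as a cone point. We expect the main obstacle to be verifying that the Morse differential between critical cells, a signed sum over gradient paths, reproduces exactly the augmented simplicial differential of $\sd\mathcal K_D$. If the direct matching is awkward, we would instead argue topologically: the relative complex is that of $(\operatorname{cone},\text{base})$ for the map $\Delta(\mathcal K\setminus\text{star})\to\Delta(\mathcal K)$. We would then use Quillen's fiber lemma to show that $\{I\in\mathcal K\}$ relative to $\{I:I\cap D=\varnothing\}$ has the homology of the suspension of $\mathcal K_D$, via the poset map $I\mapsto I\cap D$.

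Finally, taking homology, $H_n(C^{(w)})\cong\wtH_{n-2\ell(w)-1}(\sd\mathcal K_D)=\wtH_{n-2\ell(w)-1}(\mathcal K_D)$, and summing over $w$ gives the main theorem. The case $D=\varnothing$ ($w=e$) gives $\wtH_{-1}(\{\varnothing\})=\Z$ in degree $0$. Nothing in the argument uses $G$ beyond $W$ and the Bruhat combinatorics, which gives the Coxeter-general chain statement.
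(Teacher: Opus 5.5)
Your cellular model and the splitting by $w$ are the paper's (Corollary~\ref{cor:schubertbar} and Theorem~\ref{thm:exactsplit}). You also describe correctly the generators (chains with $I_0\cap D=\varnothing$, where $D=\DesR(w)$) and when $d_0$ vanishes. The gap is in what you identify this summand with.

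The subcomplex to quotient by is $L_{\mathcal K,D}=\Delta(Q_D)$, where $Q_D=\{I\in\mathcal K: I\cap D\ne\varnothing\}$. Since $Q_D$ is upward closed, a chain lies in $\Delta(Q_D)$ exactly when its minimum meets $D$. So the relative simplices of $(\Delta(\mathcal K),\Delta(Q_D))$ are precisely your admissible chains, and $d_0$ vanishing when $I_1\cap D\ne\varnothing$ is just the relative boundary. You state this backwards twice:
\begin{itemize}
\item ``$\Delta(\mathcal K)$ modulo the part where the minimum misses $D$'' is not a quotient by a subcomplex. Those chains are not closed under faces, and they are the surviving generators, not what gets killed.
\item The fallback pair ``$\{I\in\mathcal K\}$ relative to $\{I:I\cap D=\varnothing\}$'' has zero homology for every $D$, because both order complexes are cones with apex $\varnothing$. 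On that subposet $I\mapsto I\cap D$ is constant, so it cannot detect $\mathcal K_D$.
\end{itemize}
The map $c_D(I)=I\cap D$ must be applied to $Q_D$. There it is a descending closure operator whose image is the nonempty faces of $\mathcal K_D$ (Lemma~\ref{lem:descentclosure}). The suspension comes from $\Delta(\mathcal K)$ being a cone on the apex $\varnothing\notin Q_D$ (Lemma~\ref{lem:conereduction}).

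Your list of critical cells is right, but the explicit matching you describe is not well defined. Keying it to the first face $J$ that meets $D$ fails in two ways:
\begin{itemize}
\item If the entry before $J$ is a nonempty face missing $D$, it is not contained in $J\cap D$, so $J\cap D$ cannot be inserted.
\item On $\varnothing<J_1$ with $\varnothing\ne J_1\subseteq D$, your rule removes $J_1=J_1\cap D$. It therefore pairs every such chain with the apex, contradicting your own critical list.
\end{itemize}
The matching that works has two layers. First, pair each chain whose minimum is nonempty and avoids $D$ with its cone $\varnothing<I_0<\cdots<I_q$. Second, on chains $\varnothing<J_1<\cdots<J_q$ with $J_1\cap D\ne\varnothing$, insert or remove $J_k\cap D$, where $J_k$ is the first entry \emph{not contained in} $D$; insertion is legal because $J_{k-1}\subseteq D$.

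Organized this way, the ``main obstacle'' you flag disappears. The first layer is a cone matching relative to a subcomplex, and the second is a genuine sequence of elementary collapses onto the subcomplex $\sd\mathcal K_D$. So the boundary of every critical cell consists of critical cells, and no gradient path joins two critical cells. The Morse complex is then literally $\Sigma^{2\ell(w)+1}\wtC_*(\sd\mathcal K_D;\Z)$, with the lone apex as the augmentation generator.

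Finally, even a correctly aimed Quillen fiber lemma argument would give only the homology formula. It would not give the explicit integral Morse reduction asserted in the second half of the statement.
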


Formula \eqref{eq:intromain} separates the two sources of information cleanly.  The simplicial complex $\mathcal K$ supplies the groups $\wtH_*(\mathcal K_D)$; the Weyl group tells us how often each $D$ occurs as a descent set and in which degrees it appears.  Every subset $D\subseteq S$ occurs: if $w_D$ is the longest element of the standard parabolic subgroup $W_D$, then $\DesR(w_D)=D$ \cite[Section~2.4]{BB}.  Consequently the homology of every induced subcomplex of $\mathcal K$ is visible inside the homology of $X_{\mathcal K}(G)$.

This has several consequences.  Over a field, Hochster's formula identifies the groups $\wtH_*(\mathcal K_D)$ with the squarefree multigraded Betti pieces of the Stanley--Reisner ring, so \eqref{eq:intromain} gives a Weyl-descent regrading of the Hochster data.  Torsion in an induced subcomplex produces torsion in $X_{\mathcal K}(G)$.  For inclusions $\mathcal K\subseteq\mathcal L$, the induced map $X_{\mathcal K}(G)\to X_{\mathcal L}(G)$ is a homotopy equivalence exactly when all induced-subcomplex maps $\mathcal K_D\to\mathcal L_D$ are homology equivalences.  If $G$ is simple, the boundary simplex is characterized among proper complexes on $S$ by the condition that $X_{\mathcal K}(G)$ be a homology sphere.  Matroid independence complexes give a Tutte-polynomial specialization.

The point of the chain-level argument is that these statements do not come from a spectral-sequence rank calculation.  The Schubert cells of the partial flag manifolds are compatible with the parabolic projections.  After passing to the simplicial-replacement model of the homotopy colimit, the differential never changes the Weyl label $w$.  The $w$-labelled piece is a relative order-complex chain complex, and the elementary closure map $I\mapsto I\cap\DesR(w)$ reduces it to the induced subcomplex $\mathcal K_{\DesR(w)}$.  This is the mechanism behind the main theorem.

\paragraph{Guide to the paper.}
Section~\ref{sec:background} introduces the terminology needed from Coxeter theory, simplicial complexes, partial flag manifolds, and homotopy colimits, and works out the rank-two example $G=SU(3)$.  Sections~\ref{sec:bar} and \ref{sec:schubert} construct the cellular chain model.  Section~\ref{sec:hochster} proves Theorem~\ref{thm:intro} and gives examples, including a non-Boolean type-$A_3$ calculation and an example with torsion.  Section~\ref{sec:morse} gives the integral Morse refinement.  Section~\ref{sec:duality} develops the homotopy-detection, Alexander-symmetry, and sphere-rigidity consequences.  Section~\ref{sec:specializations} treats polyhedral products and matroids, while Section~\ref{sec:bivector} returns to the proper-parabolic adjoint sphere and its bivector interpretation.  The descent--Stirling cell enumeration is recorded separately in Appendix~\ref{sec:enumerator}.

\paragraph{Relation with earlier constructions.}
Face-category diagrams associated with simplicial complexes are classical in toric topology.  Panov--Ray--Vogt study colimit and homotopy-colimit constructions underlying Stanley--Reisner and Davis--Januszkiewicz spaces \cite{PRV}, while Bahri--Bendersky--Cohen--Gitler obtain induced-subcomplex decompositions for polyhedral products \cite{BBCG}.  Limonchenko--Solomadin describe quotients of moment-angle complexes by homotopy colimits of toric diagrams \cite{LS}.  Adams--Reiner give a colorful refinement of Hochster's formula attached to a proper vertex coloring \cite{AR}.  In the present construction the induced subcomplex is selected instead by a Weyl-group descent set and carries the Schubert shift $2\ell(w)$.

Parabolic homotopy decompositions also occur in Kac--Moody topology \cite{Kitchloo}, and homotopy colimits of homogeneous spaces arise in the topology of commuting elements in compact Lie groups \cite{KT}.  Cho--Choi--Kaji express the homology of real toric spaces through induced subcomplexes and, for Weyl-chamber examples of types $A$ and $B$, relate the resulting representations to descent combinatorics \cite{CCK}.  Douvropoulos and Josuat--Verg\`es study the top homology representation of a parabolic coset poset \cite{DJV}.  These constructions differ from the diagram $I\mapsto G/G_I$ considered here.  We have not found in the cited literature the fixed-$w$ chain decomposition of the normalized parabolic homotopy-colimit differential or its reduction to $\mathcal K_{\DesR(w)}$.

\section{Background and a running example}\label{sec:background}
This section collects the conventions used in the main theorem.  Readers familiar with Weyl groups and homotopy colimits may skip directly to Section~\ref{sec:bar}.

\subsection{Weyl groups, Coxeter length, and descents}
A finite Coxeter system $(W,S)$ consists of a finite group $W$ generated by a finite set $S$ of involutions, with relations $(st)^{m_{st}}=e$ for suitable integers $m_{st}\ge2$ when $s\ne t$; see \cite[Chapters~1--2]{BB}. The length $\ell(w)$ is the smallest number of generators needed to write $w$; an expression of this length is called \emph{reduced}. A simple reflection $s\in S$ is a \emph{right descent} of $w$ if multiplying by $s$ on the right shortens the word, and we write $\DesR(w)=\{s\in S:\ell(ws)<\ell(w)\}$ and $\desR(w)=|\DesR(w)|$.
For a compact connected semisimple Lie group, $W$ is its Weyl group and $S$ is obtained by choosing a system of simple roots.  A reader interested only in the Lie-group case may take this as the source of $(W,S)$ throughout the paper.

The smallest nontrivial example is type $A_2$, where $W=S_3$ and $S=\{s_1,s_2\}$ with $s_1=(12)$ and $s_2=(23)$.  The length and right descent sets are
\[
\begin{array}{c|c|c}
 w & \ell(w) & \DesR(w)\\ \hline
 e & 0 & \varnothing\\
 s_1 & 1 & \{s_1\}\\
 s_2 & 1 & \{s_2\}\\
 s_1s_2 & 2 & \{s_2\}\\
 s_2s_1 & 2 & \{s_1\}\\
 w_0=s_1s_2s_1 & 3 & \{s_1,s_2\}.
\end{array}
\]
Thus a descent set records exactly which simple reflections can shorten a given Weyl-group element. In type $A$, if a permutation is written in one-line notation, $s_i$ is a right descent precisely when $w(i)>w(i+1)$; thus $\DesR(w)$ is the usual permutation descent set.

For $I\subseteq S$, let $W_I$ be the subgroup generated by $I$. The set $W^I=\{w\in W:\ell(ws)>\ell(w)\text{ for every }s\in I\}$ consists of the minimal-length representatives of the right cosets $W/W_I$. Equivalently,
\begin{equation}\label{eq:backgroundcriterion}
  w\in W^I \quad\Longleftrightarrow\quad I\cap\DesR(w)=\varnothing.
\end{equation}
This criterion is the point at which descent sets enter the topology.

We record the other standard Coxeter facts used later. Since $W$ is finite, every standard parabolic subgroup $W_I$ has a unique longest element, denoted $w_I$, and $\DesR(w_I)=I$; thus every subset of $S$ occurs as a right descent set \cite[Section~2.4]{BB}. In particular $w_0:=w_S$ is the longest element of $W$, $N:=\ell(w_0)$, and $e$ and $w_0$ are respectively the unique elements with right descent sets $\varnothing$ and $S$. The longest-element identities
\begin{equation}\label{eq:longest}
  \ell(w_0w)=N-\ell(w),\qquad \DesR(w_0w)=S\setminus\DesR(w)
\end{equation}
hold for every $w\in W$ \cite[Section~1.8]{Humphreys}. Finally, if $I\subset J\subseteq S$, put $W_J^I:=\{v\in W_J:\ell(vs)>\ell(v)\text{ for every }s\in I\}$. Every $w\in W^I$ has a unique parabolic factorization $w=uv$ with $u\in W^J$, $v\in W_J^I$, and $\ell(w)=\ell(u)+\ell(v)$ \cite[Section~2.4]{BB}. These are the only Coxeter-theoretic facts about descents and parabolic quotients used in the proofs.

\subsection{Simplicial complexes and induced subcomplexes}
A simplicial complex $\mathcal K$ on the ground set $S$ is a collection of subsets of $S$ that contains $\varnothing$ and is closed under taking subsets.  We allow a vertex of $S$ not to occur as a one-element face; such a vertex is called a ghost vertex.  For $D\subseteq S$, the \emph{induced subcomplex} on $D$ is $\mathcal K_D=\{I\in\mathcal K:I\subseteq D\}$. For example, if $S=\{1,2,3\}$ and $\mathcal K$ is the path with edges $\{1,2\}$ and $\{2,3\}$, then $\mathcal K_{\{1,3\}}$ consists of two isolated vertices, so $\wtH_0(\mathcal K_{\{1,3\}};\Z)\cong\Z$.
This is exactly the induced subcomplex that produces the nontrivial homology in Example~\ref{ex:a3}.

Two complexes will appear repeatedly. The full simplex $2^S$ contains every subset of $S$, while its boundary $\partial\Delta^S=\{I\subsetneq S\}$ contains every proper subset but omits $S$ itself.

\subsection{Partial flag manifolds and the homotopy colimit}
Let $G$ be compact, connected, simply connected, and semisimple, with Weyl system $(W,S)$.  For $I\subseteq S$, the standard subgroup $G_I$ gives a partial flag manifold $G/G_I$.  The extreme cases are $G/G_{\varnothing}=G/T$ and $G/G_S=\mathrm{pt}$, where $T$ is a maximal torus.  If $I\subset J$, there is a canonical projection $G/G_I\to G/G_J$.

For a simplicial complex $\mathcal K\subseteq2^S$, the space $X_{\mathcal K}(G)$ is the homotopy colimit of these projections over the face poset of $\mathcal K$.  We use the Bousfield--Kan simplicial-replacement model \cite[Chapter~XII]{BK}.  Concretely, for every strict chain of faces $I_0<I_1<\cdots<I_q$ one takes a copy of $G/G_{I_0}\times\Delta^q$ and glues its faces using the parabolic projections and the usual face maps of the simplex.  This description is useful because a Schubert cell in $G/G_{I_0}$, crossed with the open simplex $\mathring\Delta^q$, becomes a cell of the homotopy colimit; Section~\ref{sec:schubert} makes this precise.

The two simplest indexing complexes already recover familiar spaces. If $\mathcal K=2^J$, the face $J$ is terminal and $X_{2^J}(G)\simeq G/G_J$.
If $G$ is simple and $\mathcal K=\partial\Delta^S$, Castellana--Kitchloo identify the resulting homotopy colimit with $S(\mathfrak g)$ \cite[Theorem~1.1]{CK}.

\subsection{A rank-two example}
Let $G=SU(3)$, whose Weyl group is the type-$A_2$ group $S_3$ listed above.  Take $\mathcal K=\partial\Delta^{\{s_1,s_2\}}=\{\varnothing,\{s_1\},\{s_2\}\}$.
For a proper nonempty subset $D\subsetneq S$, the induced subcomplex $\mathcal K_D$ is a point and has zero reduced homology.  The full induced subcomplex $\mathcal K_S$ consists of two points, so $\wtH_0(\mathcal K_S;\Z)\cong\Z$. Among the six Weyl-group elements, only the longest element $w_0$ has descent set $S$, and $\ell(w_0)=3$. Formula \eqref{eq:intromain} therefore gives one nonzero positive-degree class, in degree $2\ell(w_0)+0+1=7$. Together with the identity contribution in degree zero, this gives $H_*(X_{\mathcal K}(SU(3));\Z)\cong H_*(S^7;\Z)$,
in agreement with the adjoint-sphere identification.  This calculation contains the general mechanism in its simplest form: descents choose an induced subcomplex, and the Coxeter length determines where its homology appears.

\section{The chain model: parabolic bar complexes}\label{sec:bar}
The Bousfield--Kan model described in Section~\ref{sec:background} has one simplex direction for every chain of faces of $\mathcal K$.  We now record the corresponding chain complex abstractly.  The definition is given for an arbitrary finite Coxeter system; Section~\ref{sec:schubert} explains why it is the cellular chain complex of $X_{\mathcal K}(G)$ in the Weyl-group case.
Let $(W,S)$ be a finite Coxeter system and let $\mathcal K\subseteq 2^S$ be an abstract simplicial complex. We require $\varnothing\in\mathcal K$ but allow ghost vertices. We use the parabolic subgroups $W_I$ and minimal representatives $W^I$ introduced in Section~\ref{sec:background}; in particular, \eqref{eq:backgroundcriterion} will be used repeatedly. For every $I\in\mathcal K$ define $\cW(I)=\bigoplus_{w\in W^I}\Z e_w^I$, with $|e_w^I|=2\ell(w)$ and zero internal differential. If $I\subset J$ are faces of $\mathcal K$, define
\begin{equation}\label{eq:parabolicmap}
  p_{I,J}(e_w^I)=
  \begin{cases}
    e_w^J,&w\in W^J,\\
    0,&w\notin W^J.
  \end{cases}
\end{equation}
These maps are functorial.

\begin{definition}[Parabolic bar complex]
The parabolic bar complex over $\mathcal K$, denoted $\BK_{\mathcal K}(W)$, is the total complex of the normalized simplicial replacement of $\cW:\cat(\mathcal K)\to\mathrm{Ch}_{\Z}$. Its basis elements are $[w;I_0<\cdots<I_q]$, where $I_j\in\mathcal K$ and $w\in W^{I_0}$, and the total degree is $2\ell(w)+q$.
\end{definition}

The notation has a direct geometric meaning in the Weyl-group case.  The label $w$ denotes a Schubert cell of real dimension $2\ell(w)$ in $G/G_{I_0}$, while the chain $I_0<\cdots<I_q$ denotes a $q$-simplex in the simplicial replacement.  Thus the generator $[w;I_0<\cdots<I_q]$ corresponds to a cell of total dimension $2\ell(w)+q$.

Since the internal differentials vanish,
\begin{equation}\label{eq:bardiff}
\begin{aligned}
\partial[w;I_0<\cdots<I_q]
  &=p_{I_0,I_1}(e_w^{I_0})[I_1<\cdots<I_q]\\
  &\quad+\sum_{j=1}^q(-1)^j[w;I_0<\cdots<\widehat I_j<\cdots<I_q].
\end{aligned}
\end{equation}
The first term is zero when $w\notin W^{I_1}$. When $\mathcal K=\partial\Delta^S=\{I\subsetneq S\}$, we abbreviate $\BK(W):=\BK_{\partial\Delta^S}(W)$.

\begin{remark}
The construction is purely Coxeter-theoretic and applies to noncrystallographic finite Coxeter groups. The grading $2\ell(w)$ is chosen so that, for Weyl groups, it agrees with the real dimension of the corresponding Schubert cell.
\end{remark}

\section{Schubert cells and the topological realization}\label{sec:schubert}
We now return to the Lie-group setting and identify the abstract chain model with the cellular chains of the space $X_{\mathcal K}(G)$.  The only Lie-theoretic input is the compatibility of Schubert cells with the standard parabolic projections.
Assume now that $W$ is the Weyl group of a compact, connected, simply connected semisimple Lie group $G$. Let $G_{\C}$ be the complexification and choose a Borel subgroup $B$. For $I\subseteq S$, let $P_I\subseteq G_{\C}$ be the standard parabolic and let $G_I=G\cap P_I$; equivalently, $G_I$ is the connected maximal-rank subgroup generated by a maximal torus and the rank-one root subgroups indexed by $I$, as in \cite[Definition~3.1]{CK}. The compact homogeneous space $G/G_I$ is diffeomorphic to $G_{\C}/P_I$ and has the Schubert CW decomposition \cite{Brion}
\[
  G/G_I=\coprod_{w\in W^I}C_w^I,
  \qquad C_w^I\cong\C^{\ell(w)}.
\]
We orient every cell by its complex orientation. Since all Schubert cells have even real dimension, the cellular differential of each $G/G_I$ is zero.

\begin{proposition}[Cellular chain map of a parabolic projection]\label{prop:projection}
Let $I\subset J\subseteq S$. For $w\in W^I$, write its parabolic factorization $w=uv$ with $u\in W^J$, $v\in W_J^I$, and $\ell(w)=\ell(u)+\ell(v)$.
The projection $\pi_{I,J}:G/G_I\to G/G_J$ maps $C_w^I$ onto $C_u^J$. On cellular chains,
\[
  (\pi_{I,J})_\#(e_w^I)=
  \begin{cases}
    e_w^J,&v=e\ \text{(equivalently }w\in W^J\text{)},\\
    0,&v\ne e.
  \end{cases}
\]
Thus the cellular Schubert diagram is strictly isomorphic to the abstract diagram $\cW$ of \eqref{eq:parabolicmap}.
\end{proposition}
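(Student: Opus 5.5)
Work in the complex-algebraic model $G/G_I\cong G_{\C}/P_I$. Here the projection $\pi_{I,J}$ is the natural map $G_{\C}/P_I\to G_{\C}/P_J$, and the Schubert cells are the $B$-orbits $C_w^I=BwP_I/P_I$ for $w\in W^I$. The map is $G_{\C}$-equivariant, hence $B$-equivariant, so it sends the orbit $BwP_I/P_I$ onto the orbit $BwP_J/P_J$. Using the parabolic factorization $w=uv$ with $v\in W_J^I\subseteq W_J\subseteq P_J$, we get $wP_J=uP_J$, so this image orbit is $C_u^J$. This proves the first claim, $\pi_{I,J}(C_w^I)=C_u^J$.

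For the cellular statement, note that $\pi_{I,J}$ is cellular, because it maps each cell onto a cell of no larger dimension. The induced map on cellular chains sends $e_w^I$ to a multiple of $e_u^J$ when $\dim C_u^J=\dim C_w^I$, and to $0$ when the target cell has strictly smaller dimension, since then the image of the cell lies in the $(2\ell(w)-1)$-skeleton of the target. If $v\neq e$, then $\ell(u)=\ell(w)-\ell(v)<\ell(w)$, and the chain is zero. If $v=e$, then $u=w\in W^J$ and it remains to compute the degree of $C_w^I\to C_w^J$.

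For the degree, use the standard description of Schubert cells via the unipotent group $U_w=U\cap wU^-w^{-1}$. For $w\in W^I$, the map $U_w\to C_w^I$, $x\mapsto xwP_I$, is an isomorphism of varieties $\C^{\ell(w)}\cong C_w^I$. The same holds for $J$ when $w\in W^J$. The projection $\pi_{I,J}$ intertwines these two parametrizations, so the restriction $C_w^I\to C_w^J$ is a biholomorphism, in fact the identity on $U_w$.

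The main subtlety is that the cellular coefficient is really the degree of the induced map on $(\overline{C_w^I}, \overline{C_w^I}\setminus C_w^I)\to (X^{(2\ell)},X^{(2\ell-1)})$. We handle this in two steps. First, the preimage of the open cell $C_w^J$ inside the top-dimensional part of the source skeleton is just $C_w^I$: any other cell $C_{w'}^I$ mapping onto $C_w^J$ has $w'=wv'$ with $\ell(w')>\ell(w)$, so it is not in the relevant skeleton. Second, the map is a biholomorphism there, and a holomorphic isomorphism preserves complex orientations, so the local degree is $+1$. Hence $(\pi_{I,J})_\#(e_w^I)=e_w^J$. Since the maps agree on basis elements and both diagrams are functorial, the cellular Schubert diagram coincides with $\cW$ of \eqref{eq:parabolicmap}.
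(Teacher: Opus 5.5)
Your proof is correct and follows the same route as the paper: $B$-equivariance together with $wP_J=uP_J$ identifies the image cell, the skeleton argument disposes of the case $\ell(u)<\ell(w)$, and when $v=e$ the restriction $C_w^I\to C_w^J$ is a biholomorphism of affine spaces, so the coefficient is $+1$. Your explicit $U_w=U\cap wU^-w^{-1}$ parametrization, and your check that $C_w^I$ is the only cell of the $2\ell(w)$-skeleton lying over $C_w^J$, simply make precise the degree-$+1$ step that the paper states in one line.
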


\begin{proof}
The standard projection sends the $B$-orbit $BwP_I/P_I$ to $BwP_J/P_J=BuP_J/P_J=C_u^J$; this is the usual parabolic factorization of Schubert cells in partial flag varieties \cite{Brion}. If $v\ne e$, then $\ell(u)<\ell(w)$, so the image of the real $2\ell(w)$-cell lies in the $(2\ell(w)-1)$-skeleton of $G/G_J$. Its degree-$2\ell(w)$ cellular chain class is therefore zero. If $v=e$, the restriction $C_w^I\to C_w^J$ is an isomorphism of complex affine spaces and has degree $+1$ for the chosen orientations.
\end{proof}

\begin{lemma}[Cellular realization of the simplicial replacement]\label{lem:cellularrealization}
Let $\mathcal C$ be a finite poset and let $F:\mathcal C\to\mathrm{Top}$ be a diagram of finite CW complexes and cellular maps. In the Bousfield--Kan simplicial-replacement model for $\hocolim_{\mathcal C}F$, the products $e^p\times\mathring\Delta^q$, indexed by $c_0<\cdots<c_q$ and with $e^p$ an open $p$-cell of $F(c_0)$, form the open cells of a CW structure of dimension $p+q$. With the product orientation, the cellular chain complex is the usual total complex of the normalized simplicial replacement of $C_*^{\mathrm{cell}}(F)$; on a product of cellular degree $p$ and simplicial degree $q$, the simplex-face part of the differential carries the Koszul sign $(-1)^p$.
\end{lemma}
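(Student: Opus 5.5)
I would prove Lemma~\ref{lem:cellularrealization} by realizing the Bousfield--Kan model as a quotient of a disjoint union of products and checking the CW axioms directly.

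\textbf{Model and cells.} Write the model as the quotient
\[
Q=\Bigl(\coprod_{c_0<\cdots<c_q}F(c_0)\times\Delta^q\Bigr)\Big/\!\sim ,
\]
with identifications along the face maps. For the face $d_0$ one uses $F(c_0\to c_1)$ and lands in $F(c_1)\times\Delta^{q-1}$; for $d_j$ with $j\ge1$ one deletes $c_j$ and keeps $F(c_0)$. Degenerate simplices are collapsed. Since $\mathcal C$ is a poset, only strict chains survive after normalization, so every point of $Q$ has a unique representative $(x,t)$ with $t$ in an open simplex $\mathring\Delta^q$ of a strict chain. Hence the sets $e^p\times\mathring\Delta^q$ partition $Q$. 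The characteristic map is $\Phi_e\times\mathrm{id}\colon D^p\times\Delta^q\to Q$. Its restriction to the boundary lands in the union of lower-dimensional cells. The part $\partial D^p\times\Delta^q$ lands in the $(p-1)$-skeleton of $F(c_0)$ times $\Delta^q$. The part $D^p\times\partial\Delta^q$ is sent either into $F(c_0)\times\Delta^{q-1}$ directly, or into $F(c_1)\times\Delta^{q-1}$ via a cellular map, so into cells of dimension at most $p+q-1$. Finiteness of $\mathcal C$ and of each $F(c)$ gives finitely many cells, so closure-finiteness and the weak topology are automatic. This builds the CW structure inductively on $p+q$ by pushouts.

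\textbf{Cellular differential.} I would filter $Q$ by simplicial degree. Let $Q_q$ be the image of chains of length $\le q$. Then
\[
Q_q/Q_{q-1}\cong\bigvee_{c_0<\cdots<c_q}F(c_0)_+\wedge(\Delta^q/\partial\Delta^q).
\]
The cellular chains of $Q$ are therefore the total complex of a double complex. The horizontal differential is $C^{\mathrm{cell}}_*(F(c_0))\otimes$ (the top class of $\Delta^q$), and the vertical differential comes from the connecting map for the pair $(Q_q,Q_{q-1})$.

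To compute the differential concretely, I would take the cellular boundary of $e^p\times\Delta^q$ in the product CW complex $D^p\times\Delta^q$:
\[
\partial(e\times\sigma)=\partial e\times\sigma+(-1)^p\,e\times\partial\sigma.
\]
This is the standard product formula with the product orientation, and it produces the Koszul sign. I then push this forward along the characteristic map:
\begin{itemize}
\item Faces $d_j$ with $j\ge1$ give $(-1)^{p+j}\,e\times[c_0<\cdots\widehat{c_j}\cdots<c_q]$.
\item The face $d_0$ gives $(-1)^p\,F(c_0\to c_1)_\#(e)\times[c_1<\cdots]$, because the cellular map induces $F_\#$ on cellular chains in degree $p$.
\end{itemize}
Degenerate faces never occur in the normalized (strict-chain) complex. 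Together these reproduce exactly the total differential of the normalized simplicial replacement.

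\textbf{Main obstacle.} The step needing most care is identifying the $d_0$ contribution with $F_\#$ rather than with some twisted image. The map $e\times\Delta^{q-1}\to F(c_1)\times\Delta^{q-1}$ is $F(c_0\to c_1)\times\mathrm{id}$, which is cellular. By naturality of cellular chains under cellular maps of products, its effect on degree $p+q-1$ chains is $F_\#\otimes\mathrm{id}$. I would verify this by working in the pair $(Q_{q}^{(p+q-1)},Q^{(p+q-2)})$ and using that the degree of a cellular map on a top cell is the cellular-chain coefficient.
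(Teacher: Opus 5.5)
Your proposal is correct and follows essentially the same route as the paper's proof. Both arguments use the quotient model over strict chains and attach the product characteristic maps $D^p\times\Delta^q\to Q$ inductively by total dimension, with the zeroth face passing through the cellular structure map; both then push the product formula $\partial(e\times\sigma)=\partial e\times\sigma+(-1)^p e\times\partial\sigma$ forward, so that $F_\#$ appears on the $d_0$ face. Your additional filtration by simplicial degree is harmless extra framing that the paper does not need, because the explicit boundary computation already shows the differential lowers $q$ by at most one.
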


\begin{proof}
Use the quotient model $\left(\coprod_{c_0<\cdots<c_q}F(c_0)\times\Delta^q\right)/\!\sim$ for the simplicial replacement; see \cite[Chapter~XII]{BK}. Choose characteristic maps $D^p\to F(c_0)$. The product $D^p\times\Delta^q$ is a $(p+q)$-ball. Its boundary is the union of $\partial D^p\times\Delta^q$ and the products with the codimension-one faces of $\Delta^q$. The first part maps to lower CW dimension in $F(c_0)$, and the second part maps, through a structure map of $F$ when the zeroth simplex face is removed, to cells of total dimension at most $p+q-1$. Thus the product characteristic maps attach inductively by total dimension. No relation in the simplicial replacement identifies two points in the interiors $e^p\times\mathring\Delta^q$.

The product boundary formula gives the cellular boundary in $F(c_0)$ together with $(-1)^p$ times the alternating simplex-face boundary. On the zeroth simplex face the coefficient is the cellular chain map of $F(c_0)\to F(c_1)$; the other simplex faces delete an entry of the chain. This is the standard total-complex differential.
\end{proof}

\begin{corollary}[Schubert--bar CW model]\label{cor:schubertbar}
For every simplicial complex $\mathcal K\subseteq2^S$, the Bousfield--Kan realization of the restricted diagram $I\mapsto G/G_I$ has a CW structure with cells $C_w^{I_0}\times\mathring\Delta^q$, indexed by $I_0<\cdots<I_q$ and $w\in W^{I_0}$, of dimension $2\ell(w)+q$. Its cellular chain complex is naturally identified with $\BK_{\mathcal K}(W)$.
\end{corollary}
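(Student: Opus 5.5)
The corollary follows by feeding the restricted Schubert diagram into Lemma~\ref{lem:cellularrealization} and then identifying the output chain complex with $\BK_{\mathcal K}(W)$ using Proposition~\ref{prop:projection}.

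First we check that the lemma applies. Take $\mathcal C$ to be the face poset $\cat(\mathcal K)$, which is finite, and set $F(I)=G/G_I$ with its Schubert CW decomposition. Each $G/G_I$ is a finite CW complex whose cells $C_w^I$ are indexed by $w\in W^I$. We must also know that the structure maps $\pi_{I,J}$ are cellular. Proposition~\ref{prop:projection} shows that $\pi_{I,J}$ sends $C_w^I$ onto $C_u^J$, and $\dim C_u^J=2\ell(u)\le 2\ell(w)$. Hence the closure of a cell of dimension $2\ell(w)$ lands in the $2\ell(w)$-skeleton of $G/G_J$, so $\pi_{I,J}$ is cellular.

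Lemma~\ref{lem:cellularrealization} then gives a CW structure on the Bousfield--Kan realization. Its open cells are the products $C_w^{I_0}\times\mathring\Delta^q$, one for each chain $I_0<\cdots<I_q$ in $\mathcal K$ and each $w\in W^{I_0}$, and such a cell has dimension $2\ell(w)+q$. This proves the first assertion.

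For the chain-level assertion, the same lemma identifies the cellular chain complex with the total complex of the normalized simplicial replacement of the diagram $I\mapsto C_*^{\mathrm{cell}}(G/G_I)$. Each Schubert cell has even real dimension, so every $C_*^{\mathrm{cell}}(G/G_I)$ has zero differential. Sending the complex-oriented cell $C_w^I$ to $e_w^I$ gives a degree-preserving isomorphism $C_*^{\mathrm{cell}}(G/G_I)\cong\cW(I)$. Proposition~\ref{prop:projection} says that these isomorphisms intertwine $(\pi_{I,J})_\#$ with $p_{I,J}$, so the two diagrams are strictly isomorphic. Strictly isomorphic diagrams have isomorphic normalized simplicial replacements and hence isomorphic total complexes, which yields the isomorphism with $\BK_{\mathcal K}(W)$. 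Under it the basis cell $C_w^{I_0}\times\mathring\Delta^q$ corresponds to $[w;I_0<\cdots<I_q]$.

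The step needing the most care is matching signs. The lemma attaches the Koszul sign $(-1)^p$ to the simplex-face part of the differential, where $p=2\ell(w)$. Since $p$ is even, this sign is $+1$, and the differential reduces exactly to formula \eqref{eq:bardiff}. The other point to confirm is that the zeroth face is governed by $p_{I_0,I_1}$, which again comes from Proposition~\ref{prop:projection}.

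Naturality in $\mathcal K$ is immediate. An inclusion $\mathcal K\subseteq\mathcal L$ sends cells to cells and basis elements to basis elements compatibly, so the identification commutes with the induced maps.
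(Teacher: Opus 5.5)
Your proposal is correct and follows the paper's proof. You apply Lemma~\ref{lem:cellularrealization} to the Schubert diagram and use Proposition~\ref{prop:projection} to identify the cellular structure maps with the $p_{I,J}$. You then note that the Koszul sign $(-1)^{2\ell(w)}$ is $+1$ and the internal differentials vanish, so the differential is exactly \eqref{eq:bardiff}. Your explicit check that each $\pi_{I,J}$ is cellular (every Schubert cell maps onto a cell of no larger dimension) verifies a hypothesis of the lemma that the paper leaves implicit.
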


\begin{proof}
Apply Lemma~\ref{lem:cellularrealization} to the Schubert diagram. By Proposition~\ref{prop:projection}, its cellular chain maps are the maps $p_{I,J}$ of \eqref{eq:parabolicmap}. The Schubert cells have even real dimension $p=2\ell(w)$, so the Koszul sign in Lemma~\ref{lem:cellularrealization} is $+1$ on every Schubert generator. Since the internal Schubert differentials vanish, the resulting cellular differential is exactly \eqref{eq:bardiff}. For $\mathcal K=\partial\Delta^S$, this is the same explicit Bousfield--Kan quotient used by Castellana--Kitchloo \cite[Lemma~3.3]{CK}.
\end{proof}

\begin{proposition}[Simple connectivity]\label{prop:simplyconnected}
For every simplicial complex $\mathcal K\subseteq2^S$, the parabolic homotopy colimit $\XK(G)$ is simply connected.
\end{proposition}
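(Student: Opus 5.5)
We prove that $\XK(G)$ is simply connected by analyzing the Schubert--bar CW model of Corollary~\ref{cor:schubertbar} in low dimensions. The approach is either to apply the van Kampen theorem to the Bousfield--Kan quotient, or to reduce to a one-dimensional skeleton computation. The cells of dimension at most $2$ are the following.
\begin{itemize}
\item In dimension $0$: the cells $[e;I]$.
\item In dimension $1$: the cells $[e;I_0<I_1]$.
\item In dimension $2$: the cells $[e;I_0<I_1<I_2]$ and the Schubert cells $[s;I_0]$ with $s\in W^{I_0}$ of length one, i.e.\ $s\notin I_0$.
\end{itemize}
Every generator with $w\ne e$ has dimension at least $2$. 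Consequently the $2$-skeleton through $e$-labelled cells alone is a copy of the order complex (barycentric subdivision) of the face poset $\cat(\mathcal K)$.

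The first step is to note that the $e$-labelled subcomplex, namely the cells $[e;I_0<\cdots<I_q]$, is a subcomplex. Indeed, $e\in W^J$ for every $J$, so by \eqref{eq:bardiff} and the geometry, the boundary faces of such a cell stay $e$-labelled: the basepoint cell $C_e^{I}$ maps to $C_e^{J}$. This subcomplex is the nerve of the face poset of $\mathcal K$. Since $\varnothing\in\mathcal K$ is an initial object, the nerve is a cone with apex $\varnothing$, hence contractible.

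The second step concerns the $2$-skeleton. It is the contractible nerve-part together with $2$-cells $C_s^{I_0}\times\mathrm{pt}$ attached inside the individual fibres $G/G_{I_0}$, plus the $2$-cells of the nerve. Each $1$-cell belongs to the contractible subcomplex. Therefore the $1$-skeleton is contained in a contractible subcomplex, so every loop in the $1$-skeleton is null-homotopic there. By cellular approximation, $\pi_1(\XK(G))$ is a quotient of $\pi_1$ of the $1$-skeleton, each of whose loops already dies in the $2$-skeleton. Hence $\pi_1=1$. Connectedness holds because all $0$-cells $[e;I]$ are joined to $[e;\varnothing]$ by the $1$-cells $[e;\varnothing<I]$.

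The main obstacle is the claim that the $e$-labelled cells genuinely form a subcomplex homeomorphic to the nerve. This requires checking that in the quotient model the closure of $C_e^{I_0}\times\mathring\Delta^q$ only meets cells $C_e^{I_j}\times\Delta$, using that the projections send the point $C_e^I$ to the point $C_e^J$. This is precisely the content of Proposition~\ref{prop:projection} for $w=e$, combined with the attaching description in Lemma~\ref{lem:cellularrealization}.

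Alternatively, and as a cross-check, one can argue via van Kampen or the standard fact that a homotopy colimit of connected spaces over a category with contractible nerve has $\pi_1$ generated by the images of the $\pi_1(G/G_I)$. These groups are trivial because each $G/G_I$ is simply connected, having a CW structure with no odd cells and a single $0$-cell.
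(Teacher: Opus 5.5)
Your argument is correct and is essentially the paper's: in the Schubert--bar CW structure every cell of dimension at most one is $e$-labelled, the $e$-labelled cells form the order-complex cone $A_{\mathcal K}$ with apex $\varnothing$, and hence every cellular loop is null-homotopic. The paper phrases the final step as attaching the extra $2$-cells $[s;I_0]$ to a simply connected $2$-skeleton, which only adds relations to $\pi_1$; your explicit check that the $e$-labelled cells form a subcomplex, because each projection sends the point cell $C_e^I$ to $C_e^J$, fills in a step the paper states without comment.
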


\begin{proof}
Use the Schubert--bar CW structure of Corollary~\ref{cor:schubertbar}. A cell has dimension $2\ell(w)+q$, so every $1$-cell has $w=e$ and $q=1$. More generally, the identity-labelled cells form exactly the order complex $A_{\mathcal K}$, including its $2$-skeleton. Since $A_{\mathcal K}$ is a cone, its $2$-skeleton has trivial fundamental group. The full $2$-skeleton of $\XK(G)$ is obtained from this simply connected $2$-complex by attaching additional $2$-cells, namely those with $\ell(w)=1$ and $q=0$. Attaching $2$-cells only adds relations to the fundamental group, and the inclusion of the $2$-skeleton induces an isomorphism on $\pi_1$ \cite[Section~1.2]{Hatcher}. Hence $\XK(G)$ is simply connected.
\end{proof}

\begin{remark}
Corollary~\ref{cor:schubertbar} uses one fixed choice of Schubert cells that is compatible with every structure map in the diagram; no separate cellular approximation of the parabolic projections is required.
\end{remark}

\section{Proof of the main theorem: the descent--Hochster decomposition}\label{sec:hochster}
This section proves Theorem~\ref{thm:intro}.  The first step is an exact splitting by the Weyl label $w$.  The second step identifies the relative complex in that summand with a suspension of the induced subcomplex $\mathcal K_{\DesR(w)}$.
Put
\[
  P_{\mathcal K}=\mathcal K,
  \qquad A_{\mathcal K}=\Delta(P_{\mathcal K}),
\]
where the empty face is included as a vertex of the order complex. Thus $A_{\mathcal K}$ is a cone with apex $\varnothing$. For $D\subseteq S$ put
\[
  Q_{\mathcal K,D}=\{I\in\mathcal K:I\cap D\ne\varnothing\},
  \qquad
  L_{\mathcal K,D}=\Delta(Q_{\mathcal K,D}),
\]
with $L_{\mathcal K,D}=\varnothing$ if the poset is empty, and let
\[
  \mathcal K_D=\{I\in\mathcal K:I\subseteq D\}.
\]

\subsection{Exact splitting by Coxeter labels}
\begin{theorem}[Exact descent decomposition]\label{thm:exactsplit}
There is a based chain isomorphism
\begin{equation}\label{eq:exactsplit}
  \BK_{\mathcal K}(W)
  \cong
  \bigoplus_{w\in W}
  \Sigma^{2\ell(w)}C_*(A_{\mathcal K},L_{\mathcal K,\DesR(w)};\Z).
\end{equation}
Under this isomorphism $[w;I_0<\cdots<I_q]$ corresponds to the relative simplex $[I_0<\cdots<I_q]$ in the $w$-summand.
\end{theorem}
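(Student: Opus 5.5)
Because $\BK_{\mathcal K}(W)$ has an explicit basis and differential \eqref{eq:bardiff}, I will prove the isomorphism by matching bases directly. The first step is to show that the differential preserves the Weyl label $w$. In \eqref{eq:bardiff}, the face-deletion terms $[w;I_0<\cdots<\widehat I_j<\cdots<I_q]$ for $j\ge1$ keep $w$ and keep $I_0$. The zeroth-face term is $p_{I_0,I_1}(e_w^{I_0})[I_1<\cdots<I_q]$, which by \eqref{eq:parabolicmap} equals $[w;I_1<\cdots<I_q]$ when $w\in W^{I_1}$ and $0$ otherwise. Hence $\BK_{\mathcal K}(W)=\bigoplus_{w\in W}B_w$ as chain complexes, where $B_w$ is spanned by the generators $[w;I_0<\cdots<I_q]$ with $w\in W^{I_0}$. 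No two labels are ever mixed, so this step is immediate.

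The second step identifies the generators of $B_w$. By \eqref{eq:backgroundcriterion}, $w\in W^{I_0}$ if and only if $I_0\cap\DesR(w)=\varnothing$. For any chain $I_0<\cdots<I_q$, the sets $I_j$ increase, so $I_j\cap D\ne\varnothing$ for some $j$ exactly when $I_q\cap D\neq\varnothing$. Write $D=\DesR(w)$. A $q$-simplex $\sigma=[I_0<\cdots<I_q]$ of $A_{\mathcal K}=\Delta(\mathcal K)$ lies in $L_{\mathcal K,D}$ if and only if every vertex meets $D$, which is equivalent to $I_0\cap D\ne\varnothing$. So the simplices of $A_{\mathcal K}$ not in $L_{\mathcal K,D}$ are exactly the chains with $I_0\cap D=\varnothing$, that is, with $w\in W^{I_0}$. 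These simplices form a basis of the relative chain group $C_q(A_{\mathcal K},L_{\mathcal K,D})=C_q(A_{\mathcal K})/C_q(L_{\mathcal K,D})$. The assignment $[w;I_0<\cdots<I_q]\mapsto[I_0<\cdots<I_q]$ is therefore a bijection of bases. It shifts degree by exactly $2\ell(w)$, since the total degree is $2\ell(w)+q$.

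The third step checks that this bijection commutes with the differentials; this is the only place requiring care. In the relative complex, the simplicial boundary $\sum_{j=0}^q(-1)^j d_j\sigma$ is reduced modulo $L_{\mathcal K,D}$. For $j\ge1$, the face $d_j\sigma$ still starts at $I_0$, so it survives in the quotient and matches the corresponding term of \eqref{eq:bardiff} with the same sign $(-1)^j$. For $j=0$, the face $[I_1<\cdots<I_q]$ survives in the quotient exactly when $I_1\cap D=\varnothing$, i.e.\ when $w\in W^{I_1}$. This is precisely the condition under which the bar differential contributes $+[w;I_1<\cdots<I_q]$, and otherwise both sides give zero.

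For signs, the Koszul sign of Lemma~\ref{lem:cellularrealization} is $(-1)^{2\ell(w)}=+1$, and I take the suspension $\Sigma^{2\ell(w)}$ to introduce no sign, since it is an even shift. Hence each $B_w\cong\Sigma^{2\ell(w)}C_*(A_{\mathcal K},L_{\mathcal K,\DesR(w)};\Z)$ as based chain complexes. When $L_{\mathcal K,D}$ is empty, for instance when $D=\varnothing$, the summand is the absolute chain complex of $A_{\mathcal K}$, which is consistent with the statement. Summing over $w\in W$ gives \eqref{eq:exactsplit}.
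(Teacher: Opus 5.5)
Your proposal is correct and follows the paper's proof step for step. You split by the label $w$, identify admissible bar chains with relative simplices through $I_0\cap\DesR(w)=\varnothing$ and the upward closure of $Q_{\mathcal K,D}$, and match the zeroth face via \eqref{eq:parabolicmap}. Your explicit sign check (even suspension, agreement of the $(-1)^j$ face signs) just spells out what the paper's ``all remaining alternating faces agree term by term'' leaves implicit.
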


\begin{proof}
The differential never changes $w$, so the bar complex splits into its $w$-labelled subcomplexes. By \eqref{eq:backgroundcriterion}, a basis element with label $w$ exists exactly when $I_0\cap\DesR(w)=\varnothing$. Since the chain is increasing, a simplex lies in $L_{\mathcal K,D}$ exactly when its first vertex meets $D$. Hence the relative simplices are precisely the admissible bar chains. Deleting the first vertex is zero in the relative boundary exactly when $I_1\cap D\ne\varnothing$, which agrees with the zeroth bar face by \eqref{eq:parabolicmap}. All remaining alternating faces agree term by term.
\end{proof}

\subsection{Reduction to induced subcomplexes and Hochster's formula}
\begin{lemma}[Relative-cone reduction]\label{lem:conereduction}
For every $D\subseteq S$ there is a unit acyclic matching on $C_*(A_{\mathcal K},L_{\mathcal K,D};\Z)$ whose Morse complex is canonically the suspended augmented chain complex of $L_{\mathcal K,D}$. Explicitly, write
\[
  A_{\mathcal K}=\varnothing*B_{\mathcal K},
  \qquad
  B_{\mathcal K}=\Delta(\mathcal K\setminus\{\varnothing\}).
\]
Pair every base simplex $\sigma\subset B_{\mathcal K}$ not contained in $L_{\mathcal K,D}$ with its cone $\varnothing*\sigma$.
\end{lemma}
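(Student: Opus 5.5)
The matching in the statement is the cone-apex matching restricted to the relative complex. I would first describe the relative complex combinatorially. A simplex of $A_{\mathcal K}=\varnothing*B_{\mathcal K}$ is of one of three kinds: a base simplex $\sigma\subseteq B_{\mathcal K}$, a cone $\varnothing*\sigma$ with $\sigma$ a possibly empty base simplex, or the apex itself. Since $L_{\mathcal K,D}$ is the order complex of an up-closed subposet $Q_{\mathcal K,D}$ of $\mathcal K\setminus\{\varnothing\}$ that does not contain $\varnothing$, it is a full subcomplex of $B_{\mathcal K}$. The basis of $C_*(A_{\mathcal K},L_{\mathcal K,D})$ therefore consists of the base simplices $\sigma\not\subseteq L_{\mathcal K,D}$ together with all cones $\varnothing*\tau$, where $\tau$ ranges over every simplex of $B_{\mathcal K}$ including the empty one.

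The steps, in order, are as follows.

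\emph{(i) The pairing is well defined and uses unit coefficients.} Each $\sigma\not\subseteq L$ is matched to $\varnothing*\sigma$. Since $\varnothing$ is the minimal element, it is the first vertex of the chain, so $\partial(\varnothing*\sigma)=\sigma-\varnothing*\partial\sigma$. The incidence coefficient of $\sigma$ in $\partial(\varnothing*\sigma)$ is therefore $+1$. Every element is used at most once, because the map $\sigma\mapsto\varnothing*\sigma$ is injective.

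\emph{(ii) The matching is acyclic.} I would use the usual argument for cone matchings. A gradient path alternates between going up along a matched edge $\sigma\nearrow\varnothing*\sigma$ and going down to a codimension-one face $\tau\neq\sigma$ of $\varnothing*\sigma$ that is again an unmatched-down base simplex. The only face of $\varnothing*\sigma$ that is a base simplex is $\sigma$ itself, since every other face contains the apex. Hence no nontrivial path leaves $\varnothing*\sigma$ and reaches another matched base simplex, so there are no cycles. This is the step to check most carefully, because the relative quotient kills some faces and one must confirm that no spurious incidences survive. I expect it to be straightforward, because killing faces only removes edges from the Hasse diagram.

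\emph{(iii) Identify the critical cells and the Morse differential.} The unmatched cells are the cones $\varnothing*\tau$ with $\tau\subseteq L_{\mathcal K,D}$, including $\tau=\emptyset$, which gives the apex. The cell $\varnothing*\tau$ has degree $\dim\tau+1$, which produces the suspension shift on $\wtC_*(L_{\mathcal K,D})$, with the empty simplex in degree $-1$. For the Morse differential, $\partial(\varnothing*\tau)=\tau-\varnothing*\partial\tau$. The term $\tau$ lies in $L$, so it vanishes in the relative complex. The terms $\varnothing*\tau'$ with $\tau'$ a face of $\tau$ are all critical, because faces of a simplex of $L$ lie in $L$. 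Consequently no zigzag through matched pairs ever begins, and the Morse differential is exactly $-\varnothing*\partial\tau$.

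\emph{(iv) Conclude.} The map $\varnothing*\tau\mapsto\tau$, possibly twisted by a sign convention, identifies the Morse complex with $\Sigma\wtC_*(L_{\mathcal K,D})$. This is canonical and, by the main theorem of algebraic discrete Morse theory, chain-homotopy equivalent to the relative complex.
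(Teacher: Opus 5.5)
Your proposal is correct and follows essentially the same argument as the paper. Both use the cone matching with unit coefficient $+1$, get acyclicity from the fact that the only base-simplex face of $\varnothing*\sigma$ is $\sigma$ itself, and observe that faces of simplices of $L_{\mathcal K,D}$ stay in $L_{\mathcal K,D}$, so the Morse differential on the critical cones $\varnothing*\tau$ is just $-\varnothing*\partial\tau$ (with the edge $\varnothing*v$ giving the augmentation term $-\varnothing$); you additionally spell out the relative basis and the apex case $\tau=\emptyset$, which the paper leaves implicit.
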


\begin{proof}
The coefficient of $\sigma$ in $\partial(\varnothing*\sigma)$ is a unit. This is the standard cone matching and is acyclic. If $\tau\subset L_{\mathcal K,D}$, then every face of $\tau$ again lies in $L_{\mathcal K,D}$, so the boundary of $\varnothing*\tau$ never enters a matched pair outside $L_{\mathcal K,D}$. Thus no gradient-path correction occurs among the critical generators. For a vertex $v$ of $L_{\mathcal K,D}$, the relative boundary of the edge $\varnothing*v$ is $-\varnothing$, which is precisely the augmentation term.
\end{proof}

\begin{lemma}[Descent closure]\label{lem:descentclosure}
If $L_{\mathcal K,D}\ne\varnothing$, the map
\[
  c_D:Q_{\mathcal K,D}\longrightarrow Q_{\mathcal K,D},
  \qquad c_D(I)=I\cap D,
\]
is a descending closure operator. Its image is the poset of nonempty faces of $\mathcal K_D$. Consequently
\[
  L_{\mathcal K,D}\searrow \sd\mathcal K_D
\]
by a sequence of unit simplicial collapses.
\end{lemma}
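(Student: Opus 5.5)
First I will check the three properties of $c_D$ directly from the definitions. If $I\in Q_{\mathcal K,D}$, then $I\cap D\ne\varnothing$, and $I\cap D\subseteq I$ is again a face of $\mathcal K$. So $c_D(I)$ is a nonempty face of $\mathcal K$ meeting $D$, and $c_D$ maps $Q_{\mathcal K,D}$ into itself. Monotonicity is clear: $I\subseteq J$ gives $I\cap D\subseteq J\cap D$. The map is idempotent, since $(I\cap D)\cap D=I\cap D$, and descending, since $c_D(I)\subseteq I$. Hence $c_D$ is a descending closure (interior) operator.

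Its image consists of the faces $I$ with $I=I\cap D\ne\varnothing$, that is, the nonempty faces of $\mathcal K$ contained in $D$. This is exactly the face poset of $\mathcal K_D$ minus $\varnothing$. Its order complex is $\sd\mathcal K_D$, and it sits inside $L_{\mathcal K,D}=\Delta(Q_{\mathcal K,D})$ as a full subcomplex.

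The collapse step is where care is needed; it is the standard fact that a monotone retraction $f\le \mathrm{id}$ of a finite poset onto its image gives a collapse of order complexes. I would argue by peeling off one element at a time. Order the elements $I\in Q_{\mathcal K,D}$ with $c_D(I)\ne I$ (the non-fixed points) by decreasing cardinality, and remove them one by one. Suppose $I$ is non-fixed and maximal among the elements still remaining, and write $P'$ for the remaining poset. Then $c_D(I)<I$ lies in $P'$, because fixed points are never removed. The link of $I$ in $\Delta(P')$ is the join $\Delta(P'_{<I})*\Delta(P'_{>I})$. By maximality, $P'_{>I}$ contains only fixed points $J$, and $J\supseteq I$ gives $J=c_D(J)\supseteq c_D(I)$.

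I would then show that $P'_{<I}$ is a cone with apex $c_D(I)$. For $K<I$ in $P'$, we have $c_D(K)\subseteq K\cap c_D(I)$. The relevant comparison is between $K$ and $c_D(I)$: the chain $K\supseteq c_D(K)\subseteq c_D(I)$ shows this is a zigzag, not a direct comparison. Quillen's homotopy would give contractibility here, but it would not by itself give a collapse. Since unit collapses are needed, I would avoid that route and instead pair simplices directly: for each chain $\sigma$ in $L_{\mathcal K,D}$ not contained in $\sd\mathcal K_D$, let $I$ be its largest non-fixed element. I would match $\sigma$ with $\sigma\triangle\{c_D(I)\}$, toggling the element $c_D(I)$. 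The element $c_D(I)$ is comparable with every member of $\sigma$ lying in $Q$ that is contained in $I$ only when those members are fixed points, so this choice needs checking. If it fails, I would switch to a different rule: toggle $c_D(I_{\max})$ at the top non-fixed element, using the fact that everything above $I_{\max}$ in $\sigma$ is fixed and contains $c_D(I_{\max})$.

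The main obstacle is making this matching well defined and acyclic. Acyclicity would follow from the decreasing-cardinality order on the non-fixed element: each matched pair is removed at the stage of that element, and each pair is a free face of the current complex. That last point is exactly the statement that the current complex collapses elementarily. All incidence coefficients are $\pm1$, so the collapses are unit collapses.
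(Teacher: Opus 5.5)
Your check that $c_D$ is well defined, order preserving, idempotent and descending, and that its image is the poset of nonempty faces of $\mathcal K_D$, is correct. It is exactly what the paper verifies before citing Kozlov's collapse theorem for closure operators.

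\textbf{The gap.} It lies in your attempt to prove that collapse directly, and the proposal ends without it. You process the non-fixed elements from the top: decreasing cardinality, or the \emph{largest} non-fixed element of a chain. That order suits an ascending operator, not a descending one $c_D\le\mathrm{id}$, because then $c_D(I)$ need not be comparable with the elements below $I$. So neither the cone claim nor the toggle rule holds. Concretely, take $D=\{a,b\}$ and $\{a,b,x\}\in\mathcal K$.
\begin{itemize}[nosep]
  \item In the chain $\sigma=\{\{a,x\}<\{a,b,x\}\}$ the largest non-fixed element is $\{a,b,x\}$. But $c_D(\{a,b,x\})=\{a,b\}$ is incomparable with $\{a,x\}$, so $\sigma\triangle\{\{a,b\}\}$ is not a chain.
  \item If $\mathcal K=2^{\{a,b,x\}}$, your first peeling step removes $I=\{a,b,x\}$. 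Then $\Delta(P'_{<I})$ is the path $\{a,x\},\{a\},\{a,b\},\{b\},\{b,x\}$, which is not a cone with apex $\{a,b\}$.
\end{itemize}
Your fallback rule (toggle $c_D$ of the top non-fixed element) is the same rule restated. So no well-defined matching is produced, and the acyclicity sentence in your last paragraph has nothing to apply to.

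\textbf{The fix.} Reverse the order. Remove the non-fixed elements in \emph{increasing} cardinality; equivalently, for a chain $\sigma\not\subseteq\sd\mathcal K_D$, toggle $c_D(I)$ where $I$ is the \emph{smallest} non-fixed element of $\sigma$.
\begin{itemize}[nosep]
  \item Every element below $I$ is then a fixed point $K$ with $K\subseteq I\cap D=c_D(I)$, and every element above $I$ contains $c_D(I)$. Hence $c_D(I)$ is the maximum of $P'_{<I}$, the link $\Delta(P'_{<I})*\Delta(P'_{>I})$ is a cone, and deleting $I$ is a sequence of elementary collapses.
  \item The toggle is a well-defined involution, since adding or removing the fixed point $c_D(I)$ does not change the smallest non-fixed element.
  \item The matching is acyclic. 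Along a gradient path the next face must delete $I$ itself; otherwise it still contains $c_D(I)$ and is matched downward. So the smallest non-fixed element strictly grows.
  \item The critical simplices are exactly the chains of fixed points, namely $\sd\mathcal K_D$.
\end{itemize}
This is the elementary content of the theorem the paper cites. With this change, or by simply citing it as the paper does, your argument is complete.
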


\begin{proof}
Because $\mathcal K$ is closed under taking subsets, $I\cap D$ is a face of $\mathcal K$; it is nonempty for $I\in Q_{\mathcal K,D}$. The map is idempotent, order preserving, and satisfies $c_D(I)\subseteq I$. Its fixed points are exactly the nonempty faces contained in $D$. The conclusion follows from the closure-operator collapse theorem \cite{KozlovCollapse}.
\end{proof}

\begin{theorem}[Descent--Hochster reduction]\label{thm:hochster}
For every $D\subseteq S$ there is an integral chain-homotopy equivalence
\[
  C_*(A_{\mathcal K},L_{\mathcal K,D};\Z)
  \simeq
  \Sigma\wtC_*(\sd\mathcal K_D;\Z),
\]
where augmented reduced chains are used and $\wtC_{-1}(\{\varnothing\};\Z)=\Z$. Hence
\begin{equation}\label{eq:homology}
  H_n(\BK_{\mathcal K}(W);\Z)
  \cong
  \bigoplus_{w\in W}
  \wtH_{n-2\ell(w)-1}(\mathcal K_{\DesR(w)};\Z).
\end{equation}
For Weyl groups the same formula computes $H_n(\XK(G);\Z)$.
\end{theorem}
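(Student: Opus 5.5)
The argument is a chain of integral homotopy equivalences, glued together by Theorem~\ref{thm:exactsplit}. I treat the cases $D=\varnothing$ and $D\neq\varnothing$ separately, but with the same skeleton.

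\emph{Step 1: remove the cone.} Apply Lemma~\ref{lem:conereduction} to $C_*(A_{\mathcal K},L_{\mathcal K,D};\Z)$. The matching is acyclic with unit coefficients, so the standard theorem of algebraic discrete Morse theory gives an integral chain-homotopy equivalence to the Morse complex. The lemma identifies that Morse complex with $\Sigma\wtC_*(L_{\mathcal K,D};\Z)$; the augmentation term comes from the edges $\varnothing*v$.

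If $D=\varnothing$, then $Q_{\mathcal K,D}$ is empty and $L_{\mathcal K,D}=\varnothing$. The relative complex is $C_*(A_{\mathcal K})$, which is a cone. It is therefore equivalent to $\Z$ in degree $0$, which is $\Sigma\wtC_*(\{\varnothing\})$ with $\wtC_{-1}=\Z$. This matches $\mathcal K_\varnothing=\{\varnothing\}$, whose order complex of nonempty faces is the empty complex with augmented reduced chains $\Z$ in degree $-1$.

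\emph{Step 2: collapse onto the induced subcomplex.} Assume $D\ne\varnothing$, and first suppose $L_{\mathcal K,D}\neq\varnothing$. Lemma~\ref{lem:descentclosure} gives a sequence of elementary simplicial collapses $L_{\mathcal K,D}\searrow\sd\mathcal K_D$. Each elementary collapse induces a chain-homotopy equivalence on augmented simplicial chains, because it removes a pair of cells with a $\pm1$ incidence. Composing these gives $\wtC_*(L_{\mathcal K,D})\simeq\wtC_*(\sd\mathcal K_D)$ over $\Z$.

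If instead $L_{\mathcal K,D}=\varnothing$ while $D\neq\varnothing$, every vertex of $D$ is a ghost vertex. Then $\mathcal K_D=\{\varnothing\}$, and both sides are $\Z$ in degree $-1$, as in the case $D=\varnothing$.

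Suspending and composing with Step 1 yields
\[
C_*(A_{\mathcal K},L_{\mathcal K,D};\Z)\simeq\Sigma\wtC_*(\sd\mathcal K_D;\Z).
\]

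\emph{Step 3: assemble.} Substitute $D=\DesR(w)$ into each summand of Theorem~\ref{thm:exactsplit}. The summand indexed by $w$ is shifted by $2\ell(w)$, so its homology in degree $n$ is $\wtH_{n-2\ell(w)-1}(\sd\mathcal K_{\DesR(w)};\Z)$. The barycentric subdivision $\sd\mathcal K_D$ is homeomorphic to $|\mathcal K_D|$, and augmented reduced homology is preserved, including the degenerate case $\{\varnothing\}$. This gives \eqref{eq:homology}. For Weyl groups, Corollary~\ref{cor:schubertbar} identifies $\BK_{\mathcal K}(W)$ with the cellular chains of $\XK(G)$, so the same formula computes $H_n(\XK(G);\Z)$.

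\emph{Expected obstacle.} The delicate point is Step 1, specifically the claim that the Morse complex has no gradient-path corrections. I must check that no zig-zag path through the matched pairs $(\sigma,\varnothing*\sigma)$ connects two critical cells $\varnothing*\tau$ with $\tau\subset L_{\mathcal K,D}$. If such paths do exist, I would avoid Morse theory and use a short exact sequence argument instead. The sequence $0\to C_*(L)\to C_*(A)\to C_*(A,L)\to0$ has $C_*(A)$ a cone, so the connecting map gives $C_*(A,L)\simeq\Sigma\wtC_*(L)$ directly.
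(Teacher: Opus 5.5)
Your proof is correct and follows the paper's route: the cone matching of Lemma~\ref{lem:conereduction}, the closure collapse of Lemma~\ref{lem:descentclosure} (with the empty-$L_{\mathcal K,D}$ case handled separately), invariance under barycentric subdivision, and Corollary~\ref{cor:schubertbar}. The paper lifts the collapse matching to the cone generators to obtain a single Morse matching, whereas you compose chain homotopy equivalences; this is cosmetic. Your flagged obstacle does not arise, because every face of $\varnothing*\tau$ with $\tau\subset L_{\mathcal K,D}$ is either zero in the relative complex or critical, so no gradient paths start.
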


\begin{proof}
Apply Lemma~\ref{lem:conereduction} and then lift the collapse matching of Lemma~\ref{lem:descentclosure} one degree to the cone generators. If $L_{\mathcal K,D}$ is empty, the apex is the only critical generator, agreeing with the suspended augmented chains of $\{\varnothing\}$. Barycentric subdivision preserves simplicial homology. The final statement follows from Corollary~\ref{cor:schubertbar}.
\end{proof}

\begin{corollary}[Hochster regrading]\label{cor:hochsterregrading}
Assume that $W$ is a Weyl group with compact realization $G$ as in Section~\ref{sec:schubert}. Let $\bbk$ be a field and let $\bbk[\mathcal K]$ be the Stanley--Reisner ring on the ground set $S$. If $D=\DesR(w)$, then
\[
  \dim_{\bbk}H_{2\ell(w)+|D|-i}^{(w)}(\XK(G);\bbk)
  =\beta_{i,D}(\bbk[\mathcal K]),
\]
where the left-hand side denotes the $w$-labelled direct summand and $\beta_{i,D}$ is the squarefree multigraded Betti number.
\end{corollary}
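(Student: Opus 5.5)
The plan is to combine the $w$-labelled splitting of Theorem~\ref{thm:exactsplit} with the reduction of Theorem~\ref{thm:hochster}, both taken with coefficients in $\bbk$, and then apply Hochster's formula on the ground set $S$.

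\emph{Step 1: define the $w$-labelled summand and pass to field coefficients.} By Corollary~\ref{cor:schubertbar}, $C_*^{\mathrm{cell}}(\XK(G);\Z)\cong\BK_{\mathcal K}(W)$. By Theorem~\ref{thm:exactsplit}, this complex is a direct sum of based subcomplexes indexed by $w\in W$. Tensoring with $\bbk$ preserves this direct sum, so $H_*(\XK(G);\bbk)=\bigoplus_w H_*^{(w)}$, where $H_*^{(w)}$ is the homology of $\Sigma^{2\ell(w)}C_*(A_{\mathcal K},L_{\mathcal K,D};\bbk)$ with $D=\DesR(w)$. This is how I would make the left-hand side of the statement precise.

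\emph{Step 2: apply the integral reduction.} Theorem~\ref{thm:hochster} gives an integral chain-homotopy equivalence $C_*(A_{\mathcal K},L_{\mathcal K,D};\Z)\simeq\Sigma\wtC_*(\sd\mathcal K_D;\Z)$. A chain-homotopy equivalence survives $-\otimes_\Z\bbk$, so no universal-coefficient argument is needed. Subdivision invariance over $\bbk$ then gives
\[
H^{(w)}_n\cong \wtH_{n-2\ell(w)-1}(\mathcal K_D;\bbk).
\]
This includes the augmented convention $\wtH_{-1}(\{\varnothing\};\bbk)=\bbk$ for the case $\mathcal K_D=\{\varnothing\}$.

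\emph{Step 3: apply Hochster's formula.} Let $\bbk[\mathcal K]$ be a module over the polynomial ring $\bbk[x_s:s\in S]$; ghost vertices just contribute the generators $x_s$. Hochster's formula states
\[
\beta_{i,D}(\bbk[\mathcal K])=\dim_\bbk\operatorname{Tor}_i(\bbk[\mathcal K],\bbk)_D=\dim_\bbk\wtH_{|D|-i-1}(\mathcal K_D;\bbk).
\]
Substituting $n=2\ell(w)+|D|-i$ into Step 2 gives homological degree $n-2\ell(w)-1=|D|-i-1$, so the two dimensions agree.

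\emph{Edge cases.} I would check that Hochster's formula holds with the same augmented convention when $D=\varnothing$ or when $\mathcal K_D=\{\varnothing\}$ (a set of ghost vertices). For $D=\varnothing$, $\beta_{0,\varnothing}=1$ matches $\wtH_{-1}(\{\varnothing\})=\bbk$, which corresponds to $w=e$ in degree $0$. The ghost-vertex case follows the same convention.

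\emph{Main obstacle.} The only delicate point is bookkeeping: matching the indexing and sign conventions of Hochster's formula, in particular the reduced-homology degree $|D|-i-1$, with the suspension shift $2\ell(w)+1$.
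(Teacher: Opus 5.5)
Your proposal is correct and follows essentially the paper's argument: Hochster's formula on the ground set $S$, followed by the degree comparison $n-2\ell(w)-1=|D|-i-1$ with the descent--Hochster formula. The differences are minor. You explicitly pass the integral $w$-splitting and chain-homotopy equivalence to $\bbk$-coefficients, which the paper leaves implicit. You also quote Hochster's formula in homology form, whereas the paper states it with $\wtH^{|D|-i-1}$ and then notes that homology and cohomology have the same dimension over a field.
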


\begin{proof}
Hochster's formula gives
\[
  \beta_{i,D}(\bbk[\mathcal K])
  =\dim_{\bbk}\wtH^{|D|-i-1}(\mathcal K_D;\bbk)
\]
\cite{Hochster}. Over a field homology and cohomology have the same dimension. Compare degrees with \eqref{eq:homology}.
\end{proof}

\begin{corollary}[Descent-weighted Poincar\'e series]\label{cor:poincare}
Assume that $W$ is a Weyl group with compact realization $G$ as in Section~\ref{sec:schubert}. Put
\[
  A_{W,D}(q)=\sum_{\DesR(w)=D}q^{\ell(w)},
  \qquad
  \wtP_{\mathcal K_D}(t)=\sum_{j\ge -1}\operatorname{rank}\wtH_j(\mathcal K_D;\Z)t^j.
\]
Then
\[
  P_{\XK(G)}(t)
  =t\sum_{D\subseteq S}A_{W,D}(t^2)\wtP_{\mathcal K_D}(t).
\]
The analogous equality of dimensions holds over any field.
\end{corollary}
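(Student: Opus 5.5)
This is a generating-function rewriting of the homology formula \eqref{eq:homology} in Theorem~\ref{thm:hochster}, combined with Corollary~\ref{cor:schubertbar}, which identifies $H_*(\XK(G);\Z)$ with $H_*(\BK_{\mathcal K}(W);\Z)$.

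First, take ranks in the integral decomposition. Since ranks are additive over finite direct sums, \eqref{eq:homology} gives
\[
\operatorname{rank}H_n(\XK(G);\Z)=\sum_{w\in W}\operatorname{rank}\wtH_{n-2\ell(w)-1}(\mathcal K_{\DesR(w)};\Z).
\]
Multiply by $t^n$ and sum over $n$. All sums are finite because $W$ and $\mathcal K$ are finite.

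Next, reindex the sum by $j=n-2\ell(w)-1$, so that $t^n=t\cdot t^{2\ell(w)}\cdot t^j$. The index $j$ runs over $j\ge-1$. The augmented convention makes the $j=-1$ term nonzero exactly when $\mathcal K_D=\{\varnothing\}$. This matches the definition of $\wtP_{\mathcal K_D}$, which also starts at $j=-1$. Hence the $w$-term contributes $t\,(t^2)^{\ell(w)}\wtP_{\mathcal K_{\DesR(w)}}(t)$.

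Then group the elements $w$ by their descent set $D=\DesR(w)$. The factor $\wtP_{\mathcal K_D}(t)$ depends only on $D$, and $\sum_{\DesR(w)=D}(t^2)^{\ell(w)}=A_{W,D}(t^2)$. Summing over all $D\subseteq S$ gives the stated formula. The sum may harmlessly include every $D$, since each subset occurs as a descent set.

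For an arbitrary field $\bbk$, rerun the argument with $\bbk$-coefficients. The chain-level statements in Theorems~\ref{thm:exactsplit} and~\ref{thm:hochster} are based isomorphisms and integral chain-homotopy equivalences. They therefore survive $-\otimes\bbk$, giving
\[
H_n(\XK(G);\bbk)\cong\bigoplus_{w}\wtH_{n-2\ell(w)-1}(\mathcal K_{\DesR(w)};\bbk).
\]
The same regrouping then applies with $\dim_\bbk$ in place of rank.

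The only delicate point is this field statement. Over a field of positive characteristic, the Betti numbers of $\mathcal K_D$ can exceed the integral ranks because of torsion. So the identity must be read with both sides taken in $\bbk$-dimensions, rather than by reducing the integral ranks. The reindexing and bookkeeping above is otherwise routine.
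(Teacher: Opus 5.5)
Your proposal is correct and matches the paper, which states this corollary without a separate proof: it is the regrouping of \eqref{eq:homology} (via Corollary~\ref{cor:schubertbar}) by right descent set, with the augmented $j=-1$ term giving the constant $1$ from $w=e$. For the field statement, tensoring the based isomorphism of Theorem~\ref{thm:exactsplit} and the integral chain-homotopy equivalences of Theorem~\ref{thm:hochster} with $\bbk$ is valid, and it rightly reads $\wtP_{\mathcal K_D}$ with $\bbk$-Betti numbers rather than integral ranks.
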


\subsection{Examples and the boundary-simplex case}
\begin{example}[A non-Boolean type $A_3$ calculation]\label{ex:a3}
Let $W=S_4$ with simple reflections $s_1,s_2,s_3$, and let $\mathcal K$ be the path
\[
  \{s_1,s_2\}\cup\{s_2,s_3\}
\]
with all of its faces. Every nonempty induced subcomplex is contractible except $\mathcal K_{\{s_1,s_3\}}$, which consists of two isolated vertices and has $\wtH_0\cong\Z$. The permutations with exact descent set $\{1,3\}$ are
\[
  2143,\quad3142,\quad3241,\quad4132,\quad4231,
\]
with inversion numbers $2,3,4,4,5$. Hence
\[
  P_{X_{\mathcal K}(SU(4))}(t)=1+t^5+t^7+2t^9+t^{11}.
\]
This example is neither a partial flag manifold nor the boundary-simplex adjoint sphere.
\end{example}

\begin{example}[Torsion created by the indexing complex]
Assume $|S|=6$ and let $\mathcal K$ be a six-vertex triangulation of $\mathbb{RP}^2$. Since $\DesR(w_0)=S$ and $\wtH_1(\mathcal K;\Z)\cong\Z/2$, Theorem~\ref{thm:hochster} gives a direct summand
\[
  \Z/2\subseteq H_{2\ell(w_0)+2}(\XK(G);\Z).
\]
Thus torsion can appear in the parabolic homotopy colimit even though the Schubert cellular complex of every individual partial flag manifold $G/G_I$ is free and concentrated in even degrees.
\end{example}

If $\mathcal K=2^J$ is a simplex, then $\mathcal K_D$ is contractible when $D\cap J\ne\varnothing$ and equals $\{\varnothing\}$ otherwise. Formula \eqref{eq:homology} leaves exactly the elements $w\in W^J$ in degrees $2\ell(w)$, recovering the Schubert homology of $G/G_J$.

For the boundary simplex put
\[
  P_S=\{I\subsetneq S\},
  \qquad K_S=\Delta(P_S),
  \qquad L_D=\Delta\{I\subsetneq S:I\cap D\ne\varnothing\}.
\]
Then Theorem~\ref{thm:exactsplit} specializes to
\begin{equation}\label{eq:boundarysplit}
  \BK(W)
  \cong
  \bigoplus_{w\in W}\Sigma^{2\ell(w)}C_*(K_S,L_{\DesR(w)};\Z).
\end{equation}
Every proper nonempty induced subcomplex of $\partial\Delta^S$ is a simplex, whereas the full induced subcomplex is $\partial\Delta^S$. Hence only $e$ and $w_0$ contribute homology.

\begin{corollary}[Boundary-simplex homology]\label{cor:boundaryhomology}
Let $r=|S|$, let $w_0$ be the longest element, and put $N=\ell(w_0)$. Then
\[
  H_k(\BK(W);\Z)\cong
  \begin{cases}
    \Z,&k=0,\\
    \Z,&k=2N+r-1,\\
    0,&\text{otherwise}.
  \end{cases}
\]
\end{corollary}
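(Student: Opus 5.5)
The plan is to specialize formula \eqref{eq:homology} of Theorem~\ref{thm:hochster} to $\mathcal K=\partial\Delta^S$, since $\BK(W)=\BK_{\partial\Delta^S}(W)$ by definition. The formula gives
\[
  H_k(\BK(W);\Z)\cong\bigoplus_{w\in W}\wtH_{k-2\ell(w)-1}\bigl((\partial\Delta^S)_{\DesR(w)};\Z\bigr),
\]
so the work is to compute each induced subcomplex $(\partial\Delta^S)_D$ and then to find which $w$ realize each descent set $D$.

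There are three cases for $D$.
\begin{enumerate}
\item If $D=\varnothing$, the induced subcomplex is $\{\varnothing\}$. Its only augmented reduced homology is $\wtH_{-1}=\Z$.
\item If $\varnothing\ne D\subsetneq S$, every subset of $D$ is a proper subset of $S$. So $(\partial\Delta^S)_D=2^D$ is a full simplex, which is contractible, and all its reduced homology vanishes, including in degree $-1$ because $D$ is nonempty.
\item If $D=S$, the induced subcomplex is $\partial\Delta^S$ itself, a triangulated $(r-2)$-sphere. Its only reduced homology is $\wtH_{r-2}=\Z$. This includes the degenerate case $r=1$, where $\partial\Delta^S=\{\varnothing\}$ and the class sits in degree $-1=r-2$.
\end{enumerate}

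Next I will use the Coxeter facts recorded in Section~\ref{sec:background}: $e$ and $w_0$ are the unique elements with right descent sets $\varnothing$ and $S$ respectively. Hence only two summands survive.
\begin{itemize}
\item The $w=e$ summand contributes $\Z$ in degree $k=0+(-1)+1=0$.
\item The $w=w_0$ summand contributes $\Z$ in degree $k=2N+(r-2)+1=2N+r-1$.
\end{itemize}
All other summands vanish.

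The only point needing care is whether the two degrees could coincide. That would require $2N+r-1=0$, which cannot happen since $r\ge1$ and $N\ge1$ when $S\neq\varnothing$. So the two classes lie in distinct degrees, each equal to $\Z$, and every other degree is zero.

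I expect no real obstacle here: everything reduces to Theorem~\ref{thm:hochster} plus the two uniqueness facts about $e$ and $w_0$. The step to state explicitly is the $r=1$ boundary convention in case 3.
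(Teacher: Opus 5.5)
Your proposal is correct and matches the paper's own argument: specialize the descent--Hochster formula to $\partial\Delta^S$, note that every proper nonempty induced subcomplex is a simplex while the full one is the $(r-2)$-sphere $\partial\Delta^S$, and use that $e$ and $w_0$ are the unique elements with descent sets $\varnothing$ and $S$. Your explicit handling of the $r=1$ case and of the distinctness of the two degrees is a small but welcome addition to what the paper leaves implicit.
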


\begin{corollary}[An explicit top cycle]\label{cor:topcycle}
Fix an ordering $S=\{s_1,\ldots,s_r\}$. For $\sigma\in S_r$ put
\[
  I_j^\sigma=\{s_{\sigma(1)},\ldots,s_{\sigma(j)}\},
  \qquad1\le j\le r-1.
\]
Then
\begin{equation}\label{eq:topcycle}
  \Omega_W=\sum_{\sigma\in S_r}\operatorname{sgn}(\sigma)
  [w_0;\varnothing<I_1^\sigma<\cdots<I_{r-1}^\sigma]
\end{equation}
is a cycle in degree $2N+r-1$ and generates the top homology up to sign.
\end{corollary}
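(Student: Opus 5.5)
We prove that $\Omega_W$ is a cycle by a direct computation with \eqref{eq:bardiff}, and then identify its class using the splitting \eqref{eq:boundarysplit}.

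\textbf{Admissibility and degree.} Every chain in \eqref{eq:topcycle} starts at $I_0=\varnothing$, and $w_0\in W^{\varnothing}$, so each term is a basis element of $\BK(W)$ with label $w_0$. It has $q=r-1$, so its degree is $2N+r-1$.

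\textbf{The zeroth face vanishes.} Since $\DesR(w_0)=S$, we have $w_0\notin W^{I_1^\sigma}$ because $I_1^\sigma\neq\varnothing$. Hence $p_{\varnothing,I_1^\sigma}(e_{w_0})=0$ by \eqref{eq:parabolicmap}.

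\textbf{Interior faces cancel.} For $1\le j\le r-2$, deleting $I_j^\sigma$ leaves $\varnothing<\cdots<I_{j-1}^\sigma<I_{j+1}^\sigma<\cdots$. Let $\tau=\sigma\circ(j\ j{+}1)$. The chain for $\tau$ differs from that for $\sigma$ only at $I_j$, so deleting $I_j^\tau$ gives the same chain. Since $\operatorname{sgn}\tau=-\operatorname{sgn}\sigma$ and the face sign $(-1)^j$ is the same for both, the two terms cancel.

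\textbf{The last face cancels.} Deleting $I_{r-1}^\sigma$ gives a chain ending at $I_{r-2}^\sigma$. That chain arises only from $\sigma$ and from $\sigma\circ(r{-}1\ r)$, which have the same first $r-2$ values. The two terms again carry opposite signs and cancel. Therefore $\partial\Omega_W=0$.

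\textbf{Identifying the generator.} Under Theorem~\ref{thm:exactsplit}, $\Omega_W$ lies in the $w_0$-summand $\Sigma^{2N}C_*(K_S,L_S)$. Here $L_S=\Delta(P_S\setminus\{\varnothing\})=\sd\partial\Delta^S$, which is a sphere of dimension $r-2$, and $K_S$ is the cone on it with apex $\varnothing$. Under the cone reduction of Lemma~\ref{lem:conereduction}, the relative chain $\varnothing*c$ corresponds to the cycle $c$ in $\wtC_{r-2}(L_S)$, via the connecting isomorphism $H_{r-1}(K_S,L_S)\cong\wtH_{r-2}(L_S)$. In our case
\[
c=\sum_\sigma\operatorname{sgn}(\sigma)\,[I_1^\sigma<\cdots<I_{r-1}^\sigma].
\]
This is the standard fundamental cycle of the barycentric subdivision of $\partial\Delta^{r-1}$: it is the alternating sum over all maximal flags, and each top simplex occurs exactly once with coefficient $\pm1$.

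The main obstacle is justifying that $c$ generates $\wtH_{r-2}(L_S)\cong\Z$ rather than a proper multiple. I would argue as follows. $L_S$ is a pseudomanifold, and $c$ is a cycle whose coefficients on the top simplices are all $\pm1$. A nonzero cycle in a connected orientable pseudomanifold is an integer multiple of the fundamental class, so $c=\pm[L_S]$. One could alternatively check this by induction on $r$ using the link of a vertex $\{s\}$, but the pseudomanifold argument suffices.

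Finally, by Corollary~\ref{cor:boundaryhomology} the homology in degree $2N+r-1$ is $\Z$, and it comes entirely from the $w_0$-summand. Hence $[\Omega_W]$ generates $H_{2N+r-1}(\BK(W);\Z)$ up to sign.
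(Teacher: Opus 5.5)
Your proof is correct and follows the paper's route. The paper simply observes that, in the $w_0$-summand $\Sigma^{2N}C_*(K_S,L_S)$, the chain $\Omega_W$ is the cone on the oriented barycentric fundamental cycle of $\sd\partial\Delta^{r-1}$; you supply the details it leaves implicit, namely the explicit face cancellations giving $\partial\Omega_W=0$, the connecting isomorphism $H_{r-1}(K_S,L_S)\cong\wtH_{r-2}(L_S)$, and the pseudomanifold argument showing that $c$ is a generator rather than a multiple (the latter needs $r\ge3$ for connectedness, but for $r=1,2$ the complex $L_S$ is empty or $S^0$ and the claim is immediate).
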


\begin{proof}
The $w_0$-summand is the cone pair $\Sigma^{2N}C_*(K_S,L_S)$ with $L_S=\sd\partial\Delta^{r-1}$. The displayed chain is the cone on the standard oriented barycentric fundamental cycle.
\end{proof}

\section{Integral algebraic Morse reduction}\label{sec:morse}
We keep the natural bases. A unit matching on a based free chain complex means an acyclic matching in which every matched differential coefficient is $\pm1$ \cite{KozlovMorse,JW}.

\begin{theorem}[Morse transfer from induced subcomplexes]\label{thm:morsetransfer}
For each $D\subseteq S$, the unit matchings in Lemmas~\ref{lem:conereduction} and \ref{lem:descentclosure} give an explicit algebraic Morse reduction of the $D$-relative pair to the suspended augmented chains of $\sd\mathcal K_D$. Consequently any unit acyclic matching on $\sd\mathcal K_D$ may be applied as a second algebraic Morse reduction, shifted one degree upward. If $\mathcal K_D\ne\{\varnothing\}$ and the chosen matching has a critical vertex, the augmentation generator can be cancelled with the suspension of one such vertex; the remaining critical generators represent the shifted reduced Morse complex of $\sd\mathcal K_D$.
\end{theorem}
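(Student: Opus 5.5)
The plan is to assemble the reduction in three successive algebraic Morse steps on the based complex $C_*(A_{\mathcal K},L_{\mathcal K,D};\Z)$, using the composition property of algebraic Morse theory \cite{KozlovMorse,JW}. If a unit acyclic matching $M_1$ reduces a complex to its Morse complex, and $M_2$ is a unit acyclic matching on that Morse complex, then the result is again a Morse reduction of the original complex, given by explicit chain-homotopy-equivalence maps built from gradient paths.

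\textbf{Step 1: the cone matching.} Lemma~\ref{lem:conereduction} pairs each base simplex $\sigma\not\subset L_{\mathcal K,D}$ with its cone $\varnothing*\sigma$. The critical generators are the cones $\varnothing*\tau$ with $\tau\subseteq L_{\mathcal K,D}$, including $\tau=\emptyset$, which gives the apex. Because faces of such $\tau$ stay in $L_{\mathcal K,D}$, no zig-zag paths arise. So the Morse differential is just the restriction of the original boundary, and the Morse complex is literally $\Sigma\wtC_*(L_{\mathcal K,D})$, with the apex playing the role of the augmentation.

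\textbf{Step 2: the closure collapse, lifted through the cone.} The closure operator $c_D$ of Lemma~\ref{lem:descentclosure} yields a collapse $L_{\mathcal K,D}\searrow\sd\mathcal K_D$, realized as an acyclic matching on the simplices of $L_{\mathcal K,D}$ not in $\sd\mathcal K_D$. I would build it concretely: for a chain $\tau$ not fixed by $c_D$, toggle the element $c_D(I_k)$ at the position determined by the first non-fixed vertex, as in \cite{KozlovCollapse}. The matched coefficients are $\pm1$, since matched pairs differ by one vertex. Acyclicity is the standard one for closure matchings, and it transfers verbatim to the suspended copy, because $\varnothing*(-)$ shifts degrees uniformly and multiplies every coefficient by the same sign. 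The augmentation generator is never matched, since matched pairs involve nonempty $\tau$. Hence the critical cells are $\varnothing*\rho$ with $\rho\in\sd\mathcal K_D$, together with the apex.

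The delicate point is that the Morse differential among these critical cells must be the restricted boundary. For a collapse onto a subcomplex this holds: $\sd\mathcal K_D$ is closed under faces, so gradient paths starting at its simplices never leave it. This gives the first claim, $\Sigma\wtC_*(\sd\mathcal K_D)$.

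\textbf{Step 3: a further matching and cancelling the augmentation.} Any unit acyclic matching on $\sd\mathcal K_D$ shifts by one degree through the cone, with coefficients changing only by a global sign, and is applied by composition. It does not involve the apex, so the apex stays critical. If the matching leaves a critical vertex $v$, then in the current Morse complex the apex occurs in $\partial(\varnothing*v)$ with coefficient $\pm1$; before further matching this coefficient is $-1$, by the last line of Lemma~\ref{lem:conereduction}. I would check that this coefficient survives the earlier Morse reductions. Gradient paths from $\varnothing*v$ go through the edge $\varnothing*v$'s boundary, which consists of the apex and $v$. The apex component is only modified by paths landing at the apex, and each vertex's cone contributes exactly $\pm1$ there. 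So the coefficient remains a unit, and the pairing $(\varnothing*v,\text{apex})$ is an admissible, acyclic final cancellation, since the apex is minimal. The remaining critical generators then form the reduced Morse complex of $\sd\mathcal K_D$ shifted up by one. I expect Step 2 to be the main obstacle, specifically confirming that the lifted closure matching is acyclic and creates no correction terms in the relative, coned setting.
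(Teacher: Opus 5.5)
Your proposal is correct and follows essentially the same route as the paper: the cone matching of Lemma~\ref{lem:conereduction}, then the closure collapse of Lemma~\ref{lem:descentclosure} lifted one degree to the cone generators, then a shifted unit matching on $\sd\mathcal K_D$, and finally a unit cancellation of the apex against the suspension of a critical vertex. One point in Step~3 should be tightened: since $v\in L_{\mathcal K,D}$ vanishes in the relative complex, $\partial(\varnothing*v)=-\varnothing$ exactly, so the only gradient path from $\varnothing*v$ to the apex is the trivial one and the coefficient stays exactly $\pm1$; your remark that several vertex cones each contribute $\pm1$ would not by itself guarantee a unit.
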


\begin{proof}
The first two reductions are those of Lemmas~\ref{lem:conereduction} and \ref{lem:descentclosure}; together they identify the relative pair with the suspended augmented chain complex of $\sd\mathcal K_D$. Algebraic discrete Morse reduction can then be applied to this based complex \cite{JW}. Suspending a matched pair raises both degrees by one and preserves its unit coefficient. If a critical vertex is chosen, the suspended augmented differential sends its suspension to the augmentation generator with coefficient $\pm1$, so that pair can be cancelled as a final unit Morse step. What remains is the suspension of the reduced Morse complex of the chosen matching on $\sd\mathcal K_D$.
\end{proof}

\begin{lemma}[Relative Boolean matchings]\label{lem:booleanmatching}
After choosing a total ordering of $S$, the relative complex $C_*(K_S,L_D;\Z)$ admits a unit acyclic matching with
\begin{itemize}[nosep]
  \item one critical $0$-simplex if $D=\varnothing$;
  \item no critical simplices if $\varnothing\subsetneq D\subsetneq S$;
  \item one critical $(r-1)$-simplex if $D=S$.
\end{itemize}
\end{lemma}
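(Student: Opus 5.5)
The plan is to build each matching directly on the relative simplices of $(K_S,L_D)$ and to treat the three cases separately. A relative simplex is a chain $I_0<\cdots<I_q$ of proper subsets of $S$ with $I_0\cap D=\varnothing$. Fix the total order $s_1<\cdots<s_r$ on $S$.

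\emph{Case $D=\varnothing$.} Here $L_\varnothing=\varnothing$ and $K_S$ is the cone with apex $\varnothing$ over $B=\Delta(P_S\setminus\{\varnothing\})$. I would use the cone matching of Lemma~\ref{lem:conereduction}: pair every simplex $\sigma$ of $B$ with $\varnothing*\sigma$. The empty base simplex does not occur, since we work with unreduced relative chains, so the apex $[\varnothing]$ is the only critical cell. This matching is acyclic and has unit coefficients, as already noted in the proof of Lemma~\ref{lem:conereduction}.

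\emph{Case $\varnothing\subsetneq D\subsetneq S$.} Choose the smallest element $s$ of $D$. Since $D$ is proper, there is also some $t\notin D$, and I would toggle $s$ in the last entry of the chain. Concretely, a chain $I_0<\cdots<I_q$ with $s\notin I_q$ is matched with the chain obtained by appending $I_q\cup\{s\}$, provided $I_q\cup\{s\}\neq S$. The appended set then meets $D$, which is harmless because only $I_0$ is constrained. Conversely, a chain whose top entry contains $s$ and whose second-to-top entry equals the top entry minus $s$ is matched by deleting its top entry.

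The remaining chains are the obstacles. These are chains with $I_q=S\setminus\{s\}$, and chains whose top contains $s$ but whose second-to-top entry is not the top minus $s$. I would resolve them by a secondary matching that toggles $s$ inside the top element, in the style of the standard element-matching argument for Boolean intervals. A cleaner alternative is to use the fact that the pair has trivial relative homology, which is easy from Lemma~\ref{lem:descentclosure} because $\partial\Delta^S$ induces a full simplex on $D$ and $L_D$ collapses to $\sd$ of a simplex. One can then take the closure-operator collapse of $L_D$ onto a cone point, combined with the cone structure of $K_S$, and compose them as in Theorem~\ref{thm:hochster}. The apex $\varnothing$ gets cancelled against the suspended cone point, which leaves no critical cells. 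I expect this route to be the most economical one: the combined matching of Lemma~\ref{lem:conereduction} and Lemma~\ref{lem:descentclosure} has critical cells exactly the suspended augmented chains of $\sd\mathcal K_D$. Since $\sd\mathcal K_D$ is a cone, an explicit cone matching on it leaves one critical vertex, and Theorem~\ref{thm:morsetransfer} cancels that vertex against the augmentation.

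\emph{Case $D=S$.} Here the relative cells are the chains starting at $\varnothing$, and $L_S=\sd\partial\Delta^{r-1}$. After the cone step, one needs a unit acyclic matching on $\sd\partial\Delta^{r-1}$, together with the augmentation, that has a single critical top cell. I would use the standard lexicographic (EL-shelling) matching on the Boolean lattice: match chains by inserting or removing the element $s_1$, processed via the first position where insertion is possible. The only unmatched maximal chains are the ones with decreasing labels, and for the proper part of a Boolean lattice there is exactly one such chain. This yields exactly one critical $(r-2)$-simplex of $\sd\partial\Delta^{r-1}$, and the augmentation cancels with a vertex. Suspending gives one critical $(r-1)$-simplex, which is the cone on that chain, consistent with Corollary~\ref{cor:topcycle}.

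The main obstacle is making the $D=S$ matching explicit and verifying its acyclicity and unit coefficients. I would handle this by invoking the lexicographic shellability of Boolean lattices together with the Babson--Hersh style matching theorem cited in Kozlov's framework.
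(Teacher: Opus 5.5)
Your proposal is correct and, once you commit to the ``cleaner alternative'' in the middle case, it is essentially the paper's proof. In the paper, $D=\varnothing$ is handled by the cone matching. For $\varnothing\subsetneq D\subsetneq S$ it uses the cone reduction of Lemma~\ref{lem:conereduction}, then the closure collapse of Lemma~\ref{lem:descentclosure}, then a cone matching on $\sd 2^D$, and finally cancels the apex against the cone on the remaining critical vertex. For $D=S$ it takes a perfect shellability matching on $\sd\partial\Delta^{r-1}$ and lifts it through the cone. You obtain that matching from the EL-labeling of the Boolean lattice via Babson--Hersh, where the paper cites Chari's shelling-to-Morse construction.

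Two small cautions. A single toggle of $s_1$ does not by itself leave only the decreasing chain; you genuinely need the full lexicographic matching (or an iterated element matching), which the cited theorem provides. Also, for $r=1$ there is no vertex against which to cancel the augmentation, so the lone apex $[\varnothing]$ is itself the critical $(r-1)$-simplex; the paper treats this case separately.
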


\begin{proof}
For $D=\varnothing$, $K_S$ is the cone with apex $\varnothing$. Pair every simplex of the base with its cone over $\varnothing$; only the apex is critical.

For $\varnothing\subsetneq D\subsetneq S$, the induced complex $(\partial\Delta^S)_D$ is a simplex. Apply the cone reduction of Lemma~\ref{lem:conereduction} and the closure reduction of Lemma~\ref{lem:descentclosure}; the relative complex becomes the suspended augmented chains of a simplex. Use the standard cone matching on that simplex, with one critical vertex, and pair the relative apex with the cone on that vertex. No critical relative simplex remains.

For $D=S$ and $r=1$, the relative complex is the one-vertex complex $C_*(K_S,\varnothing)$, so the statement is immediate. Assume $r\ge2$. Then the base $L_S=\sd\partial\Delta^{r-1}$ is a shellable simplicial sphere and admits a perfect discrete Morse matching with one critical vertex and one critical $(r-2)$-simplex; this is the standard shelling-to-Morse construction \cite{Chari}. Lift every matched pair to the cone cells containing the apex $\varnothing$. Pair the relative apex itself with the cone on the critical vertex. The only remaining relative critical cell is the cone on the critical $(r-2)$-simplex, of dimension $r-1$. Acyclicity is inherited from the base matching, and the additional apex pair cannot create a directed cycle.
\end{proof}

\begin{theorem}[Perfect reduction of the parabolic bar complex]\label{thm:perfect}
Let $(W,S)$ be a finite Coxeter system, $r=|S|$, and $N=\ell(w_0)$. The based complex $\BK(W)$ admits a unit acyclic matching whose only critical generators are
\[
  [e;\varnothing]\quad\text{in degree }0,
  \qquad
  \xi_{w_0}\quad\text{in degree }2N+r-1.
\]
Consequently its algebraic Morse complex is
\begin{equation}\label{eq:morse}
  \BK(W)_{\mathrm{Morse}}\cong\Z[0]\oplus\Z[2N+r-1]
\end{equation}
with zero differential.
\end{theorem}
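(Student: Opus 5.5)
The plan is to assemble the matching summand by summand, using the exact splitting of Theorem~\ref{thm:exactsplit} in its boundary-simplex form \eqref{eq:boundarysplit}. The differential of $\BK(W)$ never changes the Weyl label $w$. The based isomorphism sends $[w;I_0<\cdots<I_q]$ to the relative simplex $[I_0<\cdots<I_q]$ in $\Sigma^{2\ell(w)}C_*(K_S,L_{\DesR(w)};\Z)$, and a suspension does not change the differential coefficients up to the fixed sign convention. So any unit acyclic matching on a relative pair $C_*(K_S,L_D)$ transports to a unit acyclic matching on the $w$-summand for each $w$ with $\DesR(w)=D$. The union of these matchings over all $w\in W$ is a matching on $\BK(W)$. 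Every matched coefficient is $\pm1$.

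First, for each $w$ I apply Lemma~\ref{lem:booleanmatching} with $D=\DesR(w)$. The standard facts recalled in Section~\ref{sec:background} say that $e$ and $w_0$ are the unique elements with descent sets $\varnothing$ and $S$. Hence exactly three cases occur:
\begin{itemize}[nosep]
  \item $w=e$ leaves the single critical $0$-simplex, namely the apex, which is the generator $[e;\varnothing]$;
  \item every $w\neq e,w_0$ leaves no critical generators;
  \item $w=w_0$ leaves one critical $(r-1)$-simplex, which sits in degree $2N+r-1$ after the shift; call it $\xi_{w_0}$.
\end{itemize}

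Second, I check acyclicity of the combined matching. Every gradient path in the Morse graph follows nonzero differential coefficients. Since the differential preserves $w$, each path stays inside a single summand. A directed cycle would therefore lie in one summand, where Lemma~\ref{lem:booleanmatching} rules it out. This is the step needing the most care, but the label-preservation makes it essentially formal.

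Finally, the Morse complex has one generator in degree $0$ and one in degree $2N+r-1$. When $r\ge2$ these degrees differ by more than one, so the Morse differential vanishes for degree reasons. The case $r=1$ needs a separate check, because then $2N+r-1=2$ and the degrees still differ by $2$; so it is also fine. The two critical generators lie in different summands in any case, and the Morse differential preserves the summand decomposition, so it is zero regardless of degrees. This gives \eqref{eq:morse}, which is consistent with Corollary~\ref{cor:boundaryhomology}. One could additionally identify $\xi_{w_0}$ with the cycle $\Omega_W$ of Corollary~\ref{cor:topcycle}, up to the Morse flow, but the statement does not require this.
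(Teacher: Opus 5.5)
Your proposal is correct and follows essentially the same route as the paper's proof. Both arguments apply the relative Boolean matching lemma to each $w$-summand of the boundary splitting, use that $e$ and $w_0$ are the unique elements with descent sets $\varnothing$ and $S$, and note that a direct sum of matchings stays acyclic because the differential preserves $w$. Your added remark that the Morse differential vanishes, by degree or by the summand decomposition, makes explicit a step the paper leaves implicit.
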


\begin{proof}
Apply Lemma~\ref{lem:booleanmatching} separately to every direct summand in \eqref{eq:boundarysplit}. Direct sums of acyclic matchings on disjoint based subcomplexes remain acyclic. In a finite Coxeter group, $e$ is the unique element with empty right descent set and $w_0$ is the unique element with right descent set $S$ \cite[Section~2.4]{BB}. Hence the $e$-summand contributes the unique critical degree-$0$ generator, all elements with nonempty proper descent set contribute none, and the $w_0$-summand contributes the unique critical Boolean degree $r-1$, shifted by $2N$.
\end{proof}

\begin{remark}
The matching is block diagonal with respect to the decomposition by $w$. Thus the cancellations can be performed independently in the $w$-summands, and their combinatorics depends only on $\DesR(w)$.
\end{remark}

\begin{remark}
The Schubert CW structures need not be regular because Schubert-variety closures may be singular. Theorem~\ref{thm:perfect} is therefore a statement of algebraic discrete Morse theory on cellular chains, not a sequence of Forman collapses of the full topological bar CW complex. A regular refinement on which the same matching becomes geometric would be a separate strengthening.
\end{remark}

\section{Topological consequences: detection, duality, and rigidity}\label{sec:duality}
Formula \eqref{eq:homology} is natural in the indexing complex.  Together with simple connectivity, this turns the additive decomposition into a homotopy-detection statement.  When $\mathcal K$ is a homology sphere, the involution $w\mapsto w_0w$ matches complementary descent sets and yields an Alexander-duality symmetry.  The same formula also characterizes the boundary simplex by the sphere property.
\subsection{Functoriality and homotopy detection}
\begin{proposition}[Functoriality in the indexing complex]\label{prop:functoriality}
Let $\mathcal K\subseteq\mathcal L\subseteq2^S$. The induced map
\[
  \BK_{\mathcal K}(W)\longrightarrow\BK_{\mathcal L}(W)
\]
respects the decomposition by $w$. On the $w$-summand it is the relative chain map
\[
  C_*(A_{\mathcal K},L_{\mathcal K,\DesR(w)})
  \longrightarrow
  C_*(A_{\mathcal L},L_{\mathcal L,\DesR(w)})
\]
induced by inclusion of face posets. In the Weyl-group case this is the cellular chain map induced by $\XK(G)\to X_{\mathcal L}(G)$.
\end{proposition}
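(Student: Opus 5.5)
The plan has three checks: that the inclusion of indexing complexes induces a chain map $\BK_{\mathcal K}(W)\to\BK_{\mathcal L}(W)$, that this map preserves the $w$-splitting of Theorem~\ref{thm:exactsplit}, and that it agrees with the cellular chain map of $\XK(G)\to X_{\mathcal L}(G)$.

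\emph{The algebraic map.} Since $\mathcal K\subseteq\mathcal L$, the face category $\cat(\mathcal K)$ is a full subcategory of $\cat(\mathcal L)$, and $\cW$ restricted to it is the diagram used for $\mathcal K$. Hence every basis element $[w;I_0<\cdots<I_q]$ of $\BK_{\mathcal K}(W)$ is also a basis element of $\BK_{\mathcal L}(W)$: the condition $w\in W^{I_0}$ depends only on $I_0$ and not on the ambient complex. The map is defined by sending each basis element to itself. To see that it is a chain map, compare the differentials \eqref{eq:bardiff}. Every term of $\partial[w;I_0<\cdots<I_q]$ is again a chain of faces of $\mathcal K$, and its coefficient, either $p_{I_0,I_1}$ or a sign $(-1)^j$, is computed identically in both complexes. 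So the two boundaries agree termwise.

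\emph{Compatibility with the splitting.} The map does not change the label $w$, so it respects the decomposition by $w$. Under the isomorphism of Theorem~\ref{thm:exactsplit}, $[w;I_0<\cdots<I_q]$ corresponds to the relative simplex $[I_0<\cdots<I_q]$. We have $A_{\mathcal K}=\Delta(\mathcal K)\subseteq\Delta(\mathcal L)=A_{\mathcal L}$. Moreover $Q_{\mathcal K,D}=Q_{\mathcal L,D}\cap\mathcal K$, which gives $L_{\mathcal K,D}=L_{\mathcal L,D}\cap A_{\mathcal K}$. So simplicial inclusion is a map of pairs, and on the $w$-summand it sends each relative simplex to the same simplex, exactly as claimed.

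\emph{The topological map.} The inclusion $\cat(\mathcal K)\subseteq\cat(\mathcal L)$ induces a map of Bousfield--Kan quotient models. It sends the piece $G/G_{I_0}\times\Delta^q$ indexed by a chain in $\mathcal K$ identically onto the piece with the same index in the model for $\mathcal L$. This map carries each open cell $C_w^{I_0}\times\mathring\Delta^q$ of Corollary~\ref{cor:schubertbar} homeomorphically, preserving the product orientation, onto the cell with the same label. So it is cellular, and its cellular chain map sends each cell generator to the corresponding generator with coefficient $+1$. That is precisely the algebraic map above.

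The only step that needs care is this last one. One has to confirm that the map of hocolims is literally the induced map of quotient models, rather than being defined only up to homotopy. One also has to confirm that the fixed Schubert cells and orientations are chosen uniformly in $I$, independent of whether the ambient complex is $\mathcal K$ or $\mathcal L$; the remark after Proposition~\ref{prop:simplyconnected} on a single compatible choice gives this.
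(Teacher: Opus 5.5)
Your proposal is correct and follows the same route as the paper. The map sends $[w;I_0<\cdots<I_q]$ to the identical generator of $\BK_{\mathcal L}(W)$, so it preserves $w$ and becomes the relative inclusion under Theorem~\ref{thm:exactsplit}, and the topological identification comes from naturality of the Bousfield--Kan quotient model. You also make explicit what the paper leaves implicit: the chain-map check, the equality $L_{\mathcal K,D}=L_{\mathcal L,D}\cap A_{\mathcal K}$, and the cell-by-cell orientation argument.
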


\begin{proof}
The map of bar complexes sends a generator $[w;I_0<\cdots<I_q]$ to the generator with the same label and face chain, now regarded in $\mathcal L$. Hence it preserves the $w$-label and, under \eqref{eq:exactsplit}, is exactly the map of relative order-complex chains induced by inclusion. The topological statement follows from naturality of the simplicial replacement.
\end{proof}

\begin{theorem}[Homotopy detection under inclusions]\label{thm:homotopydetection}
Let $G$ be compact, connected, simply connected, and semisimple, and let $\mathcal K\subseteq\mathcal L\subseteq2^S$. The natural map
\[
  \XK(G)\longrightarrow X_{\mathcal L}(G)
\]
is a homotopy equivalence if and only if, for every $D\subseteq S$, the inclusion
\[
  \mathcal K_D\longrightarrow\mathcal L_D
\]
induces isomorphisms on all augmented reduced integral homology groups, including degree $-1$.
\end{theorem}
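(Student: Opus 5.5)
The plan is to combine Whitehead's theorem with the $w$-graded naturality of Proposition~\ref{prop:functoriality}. By Proposition~\ref{prop:simplyconnected} both $\XK(G)$ and $X_{\mathcal L}(G)$ are simply connected CW complexes. By the homology form of Whitehead's theorem, the natural map is therefore a homotopy equivalence if and only if it induces isomorphisms on all integral homology groups. Corollary~\ref{cor:schubertbar} identifies the map on cellular chains with $\BK_{\mathcal K}(W)\to\BK_{\mathcal L}(W)$. Proposition~\ref{prop:functoriality} shows that this chain map is block diagonal in $w$, with $w$-block equal to the inclusion-induced map of pairs $C_*(A_{\mathcal K},L_{\mathcal K,D})\to C_*(A_{\mathcal L},L_{\mathcal L,D})$, where $D=\DesR(w)$. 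So the topological map is a homology isomorphism if and only if each $w$-block is a quasi-isomorphism. Since every $D\subseteq S$ occurs as $\DesR(w_D)$, it suffices to prove, for each fixed $D$, that the relative map is a quasi-isomorphism if and only if $\mathcal K_D\to\mathcal L_D$ is an isomorphism on augmented reduced homology.

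For this per-$D$ step I want a natural version of Theorem~\ref{thm:hochster}. Since $A_{\mathcal K}$ and $A_{\mathcal L}$ are cones with the common apex $\varnothing$, the long exact sequence of the pair gives natural isomorphisms $H_{n}(A,L_{D})\cong \wtH_{n-1}(L_{D})$. Here $\wtH$ is the augmented reduced homology, and in degree $-1$ it records whether $L_D$ is empty. These isomorphisms commute with the maps induced by $\mathcal K\subseteq\mathcal L$, so the relative map is a homology isomorphism exactly when $L_{\mathcal K,D}\to L_{\mathcal L,D}$ is an isomorphism on augmented reduced homology. Next I use the closure operator $c_D(I)=I\cap D$ of Lemma~\ref{lem:descentclosure}. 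Because $\mathcal K\subseteq\mathcal L$, the operators for the two complexes commute with the inclusion of posets. The inclusion of fixed points $\sd\mathcal K_D\hookrightarrow L_{\mathcal K,D}$ is a homotopy equivalence; its homotopy inverse is $\Delta(c_D)$, and the homotopy comes from the comparison $c_D(I)\subseteq I$ via the standard order-homotopy lemma. Hence the square
\[
\begin{array}{ccc}
\sd\mathcal K_D & \longrightarrow & \sd\mathcal L_D\\
\downarrow & & \downarrow\\
L_{\mathcal K,D} & \longrightarrow & L_{\mathcal L,D}
\end{array}
\]
commutes, and its vertical maps are homotopy equivalences. The empty case needs separate care. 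Here $L_{\mathcal K,D}=\varnothing$ if and only if $\mathcal K_D=\{\varnothing\}$, and this case is exactly the degree $-1$ condition. Finally, barycentric subdivision is natural on homology, so the map on $L$'s is an augmented homology isomorphism if and only if $\mathcal K_D\to\mathcal L_D$ is one.

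Combining the two steps proves both directions at once. The \emph{if} direction assembles blockwise quasi-isomorphisms into a homology isomorphism and then applies Whitehead. For the \emph{only if} direction, a homotopy equivalence gives a homology isomorphism, hence a quasi-isomorphism on each $w$-summand. Taking $w=w_D$ for an arbitrary $D$ then gives the required isomorphism for $\mathcal K_D\to\mathcal L_D$.

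The main obstacle is naturality. Theorem~\ref{thm:hochster} is stated only as an abstract chain equivalence obtained by Morse matchings, which need not commute with the inclusion maps. I will avoid this by using the canonical long exact sequence and the poset-homotopy argument above instead of the matchings. I will also check carefully that the degree $-1$ bookkeeping matches in the case where $L_{\mathcal K,D}$ is empty but $L_{\mathcal L,D}$ is not.
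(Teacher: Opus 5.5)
Your proposal is correct and follows the paper's proof essentially step for step. The shared steps are: block-diagonality from Proposition~\ref{prop:functoriality}, the natural cone-pair isomorphism $H_q(A,L_D)\cong\wtH_{q-1}(L_D)$, the compatibility of the closure operators $I\mapsto I\cap D$ with the inclusion $\mathcal K\subseteq\mathcal L$, the homological Whitehead theorem via simple connectivity, and the longest elements $w_D$ for the converse. Your explicit order-homotopy argument using $c_D(I)\subseteq I$, together with the bookkeeping for the empty case, simply spells out the naturality that the paper asserts in one line.
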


\begin{proof}
By Proposition~\ref{prop:functoriality}, the cellular map is block diagonal with respect to the decomposition by $w$. For $D=\DesR(w)$, naturality of the long exact sequence of the cone pair gives
\[
  H_q(A_{\mathcal K},L_{\mathcal K,D})\cong\wtH_{q-1}(L_{\mathcal K,D}),
  \qquad
  H_q(A_{\mathcal L},L_{\mathcal L,D})\cong\wtH_{q-1}(L_{\mathcal L,D}).
\]
The closure operators $I\mapsto I\cap D$ commute with the inclusion of face posets, so the closure-operator equivalences identify this relative map with the map induced by $\mathcal K_D\hookrightarrow\mathcal L_D$, including the augmented degree $-1$ case. If all induced-subcomplex inclusions are homology equivalences in this augmented sense, then the map of total spaces is an integral homology equivalence. Both spaces are simply connected by Proposition~\ref{prop:simplyconnected}; the homological Whitehead theorem therefore makes it a homotopy equivalence \cite[Corollary~4.33]{Hatcher}.

Conversely, suppose the map of total spaces is a homotopy equivalence. Its block-diagonal map on integral homology is then an isomorphism on every $w$-summand. Given $D\subseteq S$, take the longest element $w_D$ of $W_D$, for which $\DesR(w_D)=D$ \cite[Section~2.4]{BB}. The $w_D$-block is precisely the shifted map on augmented reduced homology of $\mathcal K_D\hookrightarrow\mathcal L_D$.
\end{proof}

\begin{corollary}[Detection of induced-subcomplex homology]\label{cor:detection}
For every $D\subseteq S$, let $w_D$ be the longest element of $W_D$. Then $\wtH_j(\mathcal K_D;\Z)$ occurs as a direct summand of
\[
  H_{2\ell(w_D)+j+1}(\BK_{\mathcal K}(W);\Z).
\]
Consequently $H_*(\BK_{\mathcal K}(W);\Z)$ is torsion free if and only if every induced subcomplex $\mathcal K_D$ has torsion-free reduced integral homology. If, in addition, $\mathcal K$ has vertex set exactly $S$, then $\XK(G)$ is contractible if and only if $\mathcal K=2^S$.
\end{corollary}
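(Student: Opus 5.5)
The corollary has three parts, and I will prove them in order, each as a consequence of Theorem~\ref{thm:hochster} and Theorem~\ref{thm:exactsplit}.

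\emph{Detection.} The splitting \eqref{eq:exactsplit} is a direct sum of chain complexes, so homology splits as a direct sum over $w\in W$. By Theorem~\ref{thm:hochster}, the $w$-summand has homology $\wtH_{n-2\ell(w)-1}(\mathcal K_{\DesR(w)};\Z)$ in degree $n$. Now take $w=w_D$. Since $\DesR(w_D)=D$ \cite[Section~2.4]{BB}, the summand indexed by $w_D$ contributes $\wtH_j(\mathcal K_D;\Z)$ in degree $n=2\ell(w_D)+j+1$. This gives the first claim.

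\emph{Torsion.} Suppose every $\wtH_*(\mathcal K_D;\Z)$ is free. Then every summand in \eqref{eq:homology} is free. The sum is finite, because $W$ is finite and each $\mathcal K_D$ is a finite complex, so $H_*(\BK_{\mathcal K}(W);\Z)$ is free. Conversely, a direct summand of a torsion-free group is torsion free. By the detection statement, each $\wtH_j(\mathcal K_D)$ is such a summand, so each is torsion free.

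\emph{Contractibility.} Assume the vertex set of $\mathcal K$ is exactly $S$. If $\mathcal K=2^S$, then $S$ is terminal in the face poset, so $\XK(G)\simeq G/G_S=\mathrm{pt}$.

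For the converse, suppose $\XK(G)$ is contractible. By Corollary~\ref{cor:schubertbar} its reduced integral homology vanishes. By detection, $\wtH_*(\mathcal K_D;\Z)=0$ for every nonempty $D$: the summand for $w_D$ lies in degree at least $2\ell(w_D)\ge 2$, so it lies in positive degree. (For $D=\varnothing$, the term $\wtH_{-1}=\Z$ in degree $0$ accounts for $H_0$.)

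I will show $\mathcal K=2^S$ by minimality. Suppose instead that $D\subseteq S$ is a minimal non-face. Since every vertex of $S$ is a face, $|D|\ge2$. Every proper subset of $D$ lies in $\mathcal K$, so $\mathcal K_D=\partial\Delta^D$. This is a sphere of dimension $|D|-2\ge0$, so $\wtH_{|D|-2}(\mathcal K_D;\Z)\cong\Z\neq0$, a contradiction. Hence every subset of $S$ is a face, and $\mathcal K=2^S$.

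The only step needing care is this last one. The hypothesis that there are no ghost vertices is essential. Without it, $D$ could be a single vertex $\{s\}$, giving $\mathcal K_D=\{\varnothing\}$ with $\wtH_{-1}\cong\Z$. That class sits in degree $2\ell(s)=2$ and is still nonzero, but the statement as posed excludes this case anyway. So the argument above needs only $|D|\ge2$, which the hypothesis guarantees.
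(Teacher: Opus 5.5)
Your proposal is correct and follows the paper's proof essentially step by step. Detection comes from $\DesR(w_D)=D$ and the $w$-splitting, the torsion criterion from the direct-sum decomposition, and non-contractibility from a minimal nonface $F$ with $\mathcal K_F=\partial\Delta^F$, detected in positive degree. One small inconsistency: your closing paragraph calls the no-ghost hypothesis essential, but your own observation shows that for $|F|=1$ the class $\wtH_{-1}(\{\varnothing\})\cong\Z$ lands in degree $2$. So the argument goes through without that hypothesis.
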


\begin{proof}
The equality $\DesR(w_D)=D$ and \eqref{eq:homology} give the first assertion. The torsion criterion follows because the full homology is a direct sum of shifted induced-subcomplex homology groups. For the last statement, the full simplex has terminal face $S$ and hence $X_{2^S}(G)\simeq G/G_S=\mathrm{pt}$. Conversely, if $\mathcal K\ne2^S$ and there are no ghost vertices, choose a minimal nonface $F$. Then $|F|\ge2$ and $\mathcal K_F=\partial\Delta^F$, whose reduced homology is nonzero; the first assertion detects it in positive degree. Thus the total space is not contractible. Equivalently, if all positive-degree homology vanishes, Proposition~\ref{prop:simplyconnected} and the homological Whitehead theorem imply contractibility \cite[Corollary~4.33]{Hatcher}.
\end{proof}

Recall from \eqref{eq:longest} that $N=\ell(w_0)$ and that left multiplication by $w_0$ complements right descent sets while reversing Coxeter length.

\begin{theorem}[Coxeter--Alexander symmetry]\label{thm:alexander}
Assume that $W$ is a Weyl group with compact realization $G$ as in Section~\ref{sec:schubert}. Let $\bbk$ be a field and suppose $\mathcal K$ is a $d$-dimensional generalized homology sphere over $\bbk$ with vertex set $S$. Put
\[
  m=2N+d+1.
\]
Then
\[
  \dim_{\bbk}H_j(\XK(G);\bbk)
  =\dim_{\bbk}H_{m-j}(\XK(G);\bbk).
\]
More precisely, after Alexander duality and the universal coefficient isomorphism over $\bbk$, the $w$-summand is dual as a finite-dimensional $\bbk$-vector space to the complementary-degree $w_0w$-summand.
\end{theorem}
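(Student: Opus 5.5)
The plan is to prove the statement summand by summand, using the main decomposition \eqref{eq:homology} over $\bbk$ (Theorem~\ref{thm:hochster} with field coefficients, valid by the same chain-level argument or by universal coefficients) together with the involution $w\mapsto w_0w$ on $W$. Write $D=\DesR(w)$ and $D^c=S\setminus D$. By \eqref{eq:longest}, $\DesR(w_0w)=D^c$ and $\ell(w_0w)=N-\ell(w)$. So the $w$-summand in degree $j$ is $\wtH_{j-2\ell(w)-1}(\mathcal K_D;\bbk)$, and the $w_0w$-summand in degree $m-j$ is
\[
  \wtH_{m-j-2N+2\ell(w)-1}(\mathcal K_{D^c};\bbk)=\wtH_{d-(j-2\ell(w)-1)-1}(\mathcal K_{D^c};\bbk).
\]
Setting $i=j-2\ell(w)-1$, it suffices to show
\[
  \dim\wtH_i(\mathcal K_D;\bbk)=\dim\wtH_{d-i-1}(\mathcal K_{D^c};\bbk)
\]
for every $D\subseteq S$ and every $i\ge-1$. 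Summing over $w\in W$, a bijection since $w\mapsto w_0w$ is an involution, then gives the dimension identity.

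The key input is combinatorial Alexander duality inside a generalized homology sphere. Since $\mathcal K$ is a $d$-dimensional $\bbk$-homology sphere on vertex set $S$, for a proper nonempty $D$ the geometric realization $|\mathcal K_D|$ is a deformation retract of $|\mathcal K|\setminus|\mathcal K_{D^c}|$. This is the standard fact that the complement of an induced subcomplex retracts onto the complementary induced subcomplex, by pushing along the straight-line join coordinates. Alexander duality in a $\bbk$-homology $d$-sphere then gives
\[
  \wtH_i(\mathcal K_D;\bbk)\cong\wtH^{d-i-1}(\mathcal K_{D^c};\bbk).
\]
Over a field, universal coefficients convert this cohomology to homology of the same dimension. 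This gives both the dimension equality and the finer statement that the $w$-summand is dual to the $w_0w$-summand.

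The boundary cases $D=\varnothing$ and $D=S$ are handled directly, with augmented homology. For $D=\varnothing$ we have $\mathcal K_\varnothing=\{\varnothing\}$, whose only group is $\wtH_{-1}=\bbk$. For $D^c=\varnothing$ we have $\mathcal K_S=\mathcal K$, whose only group is $\wtH_d=\bbk$. The indices match, since $i=-1$ corresponds to $d-i-1=d$. This is where the convention $\wtH_{-1}(\{\varnothing\})=\bbk$ is essential. The hypothesis that the vertex set is exactly $S$, so there are no ghost vertices, is used to guarantee that $\mathcal K_D$ is the full subcomplex on actual vertices. Otherwise a ghost vertex in $D$ would break the retraction, and $D=\{v\}$ would give $\wtH_{-1}=\bbk$ with nothing dual on the other side.

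The main obstacle is making Alexander duality rigorous for \emph{generalized} homology spheres rather than PL spheres. I would use the formulation from the combinatorial Alexander duality literature, in the Björner--Tancer form, for the pair (homology manifold, induced subcomplex). The homology-sphere condition on links gives Poincaré--Lefschetz duality $H^{*}(|\mathcal K|,|\mathcal K|\setminus|\mathcal K_{D^c}|)\cong H_{d-*}(\mathcal K_{D^c})$ via the dual block decomposition of $\sd\mathcal K$. Combining this with the long exact sequence of the pair and $\wtH_*(\mathcal K)\cong\wtH_*(S^d)$ gives the reduced duality in the range $\varnothing\ne D\ne S$. The remaining steps are the bookkeeping of degree shifts already described.
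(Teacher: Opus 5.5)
Your proposal is correct and is essentially the paper's proof: both combine the decomposition \eqref{eq:homology} over $\bbk$, the involution $w\mapsto w_0w$ with \eqref{eq:longest}, and the duality $\wtH_i(\mathcal K_D;\bbk)\cong\wtH^{d-i-1}(\mathcal K_{S\setminus D};\bbk)$ plus universal coefficients, with the same degree bookkeeping. The only difference is that the paper cites Fan--Wang for this duality for all $D\subseteq S$, whereas you sketch it via dual blocks and Poincar\'e--Lefschetz duality and treat $D=\varnothing,S$ by hand. One harmless slip in your aside: for a ghost vertex $v$ and $D=\{v\}$ there is a dual, namely $\wtH_d(\mathcal K_{S\setminus\{v\}})=\wtH_d(\mathcal K)$; indeed $\mathcal K_D=\mathcal K_{D\cap V}$, with $V$ the set of actual vertices, so the duality survives ghost vertices, and this does not affect your argument.
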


\begin{proof}
For $D\subseteq S$, Alexander duality for generalized homology spheres gives
\[
  \wtH_i(\mathcal K_D;\bbk)
  \cong
  \wtH^{d-i-1}(\mathcal K_{S\setminus D};\bbk)
\]
\cite[Theorem~3.4]{FanWang}; see also \cite{GS}. Over a field, the latter group is naturally the dual vector space of $\wtH_{d-i-1}(\mathcal K_{S\setminus D};\bbk)$. A class in the $w$-summand arising from degree $i$ of $\mathcal K_D$ occurs in total degree $2\ell(w)+i+1$. By \eqref{eq:longest}, the corresponding dual vector space in the $w_0w$-summand occurs in degree
\[
  2(N-\ell(w))+(d-i-1)+1
  =m-(2\ell(w)+i+1).
\]
Summing dimensions over $w$ gives the stated symmetry.
\end{proof}

\begin{remark}
Theorem~\ref{thm:alexander} is an additive duality statement. It does not assert that $\XK(G)$ is a Poincar\'e duality space or that the vector-space duality is induced by cup or cap product.
\end{remark}

\begin{theorem}[Sphere rigidity]\label{thm:rigidity}
Let $G$ be compact, connected, simply connected, and simple, with irreducible Weyl system $(W,S)$ of rank at least two. Let $\mathcal K$ be a proper simplicial complex with vertex set exactly $S$. The following are equivalent:
\begin{enumerate}[label=(\roman*)]
  \item $\XK(G)$ is an integral homology sphere;
  \item $\XK(G)$ is homotopy equivalent to a sphere;
  \item $\mathcal K=\partial\Delta^S$.
\end{enumerate}
For (iii), the space is in fact the unit adjoint sphere.
\end{theorem}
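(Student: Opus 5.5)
The plan is to prove the cycle (iii)$\Rightarrow$(ii)$\Rightarrow$(i)$\Rightarrow$(iii). The implication (ii)$\Rightarrow$(i) is immediate. The real content is (i)$\Rightarrow$(iii), which I will get by counting Weyl-group elements with a prescribed descent set, using irreducibility.

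For (iii)$\Rightarrow$(ii), take $\mathcal K=\partial\Delta^S$. Corollary~\ref{cor:boundaryhomology}, combined with Corollary~\ref{cor:schubertbar}, gives $H_*(\XK(G);\Z)\cong H_*(S^{2N+r-1};\Z)$. By Proposition~\ref{prop:simplyconnected} the space is a simply connected CW complex. Since $r\ge2$, we have $2N+r-1\ge2$. I then choose a map $S^{2N+r-1}\to\XK(G)$ representing a generator of the bottom nonvanishing homotopy group. Hurewicz gives $\pi_{2N+r-1}\cong H_{2N+r-1}\cong\Z$, so this map is a homology isomorphism, and the homological Whitehead theorem \cite[Corollary~4.33]{Hatcher} makes it a homotopy equivalence. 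The identification with the adjoint sphere is \cite[Theorem~1.1]{CK}.

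For (i)$\Rightarrow$(iii), assume $\XK(G)$ is an integral homology sphere, i.e.\ its total reduced homology is a single copy of $\Z$ in one positive degree. Suppose $\mathcal K\ne\partial\Delta^S$. Since $\mathcal K$ is proper, it is not $2^S$, so it has a minimal nonface $F$. Because there are no ghost vertices, $|F|\ge2$, and since $\mathcal K\ne\partial\Delta^S$ we have $F\subsetneq S$. Moreover $\mathcal K_F=\partial\Delta^F$ has $\wtH_{|F|-2}\cong\Z$. By Theorem~\ref{thm:hochster}, every $w$ with $\DesR(w)=F$ contributes a copy of $\Z$ in the positive degree $2\ell(w)+|F|-1$. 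The $e$-summand supplies only degree $0$. Hence it suffices to exhibit two distinct elements with right descent set exactly $F$; these give reduced rank at least $2$, a contradiction.

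The two candidates are $w_F$ and $w':=w_0w_{S\setminus F}$. By \eqref{eq:longest}, $\DesR(w')=S\setminus(S\setminus F)=F$. The main obstacle is showing $w'\ne w_F$, and this is where irreducibility enters. If $w'=w_F$, then $w_0=w_Fw_{S\setminus F}$. Counting lengths, and using that $\ell(w_I)$ is the number of positive roots of the parabolic root subsystem $\Phi_I$, this forces $N=\ell(w_F)+\ell(w_{S\setminus F})$. Then every positive root lies in $\Phi_F\cup\Phi_{S\setminus F}$. Since $F$ and $S\setminus F$ are nonempty, no positive root has support meeting both, so the Dynkin diagram is disconnected, contradicting irreducibility. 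Here I rely on the standard fact that the support of some positive root is all of $S$ when the diagram is connected; the highest root suffices. Therefore $w'\ne w_F$, which completes the contradiction and shows $\mathcal K=\partial\Delta^S$.
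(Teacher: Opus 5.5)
Your proof is correct and follows the paper's argument for (i)$\Rightarrow$(iii): a minimal nonface $F\subsetneq S$ with $\mathcal K_F=\partial\Delta^F$, the two elements $w_F$ and $w_0w_{S\setminus F}$ with descent set $F$, and the highest-root count showing they are distinct. There your use of \eqref{eq:longest} gives the equality $N=\ell(w_F)+\ell(w_{S\setminus F})$ directly, where the paper only uses the inequality. The one difference is the ordering: you apply the simple-connectivity/Hurewicz/Whitehead argument to the boundary-simplex homology to get (iii)$\Rightarrow$(ii), and obtain (i)$\Rightarrow$(ii) by passing through (iii), so Castellana--Kitchloo is needed only for the adjoint-sphere clause. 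The paper instead applies that argument to an arbitrary homology sphere for (i)$\Rightarrow$(ii) and cites Castellana--Kitchloo for (iii)$\Rightarrow$(ii); both orderings are valid.
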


\begin{proof}
The implication (ii)$\Rightarrow$(i) is immediate. Suppose (i) holds, so $\XK(G)$ has the integral homology of $S^m$ for some $m$. The space is connected, Proposition~\ref{prop:simplyconnected} rules out $m=1$, and $m=0$ is impossible for a connected homology sphere; hence $m\ge2$.

We first show that $\XK(G)$ is $(m-1)$-connected. If not, let $n\ge2$ be the least integer with $\pi_n(\XK(G))\ne0$. By minimality, the space is $(n-1)$-connected, so the Hurewicz theorem gives an isomorphism $\pi_n\cong H_n$. If $n<m$, this contradicts the vanishing of $H_n$. Thus no such $n<m$ exists, and the space is $(m-1)$-connected. Hurewicz in degree $m$ then gives $\pi_m\cong H_m\cong\Z$. A generator is represented by a map $S^m\to\XK(G)$ which is an integral homology equivalence; the homological Whitehead theorem makes it a homotopy equivalence \cite[Theorem~4.32 and Corollary~4.33]{Hatcher}. This proves (i)$\Rightarrow$(ii).

It remains to characterize when (i) holds. Because $\mathcal K$ is proper, it has a minimal nonface $F$. The no-ghost hypothesis gives $|F|\ge2$. Minimality implies
\[
  \mathcal K_F=\partial\Delta^F,
  \qquad
  \wtH_{|F|-2}(\mathcal K_F;\Z)\cong\Z.
\]
The longest element $w_F$ of $W_F$ satisfies $\DesR(w_F)=F$ \cite[Section~2.4]{BB}. By \eqref{eq:longest}, $w_0w_{S\setminus F}$ also has descent set $F$. If $F\subsetneq S$, these two elements are distinct. Indeed, the longest element of a finite standard parabolic subgroup is an involution \cite[Section~2.4]{BB}, so equality would give $w_0=w_Fw_{S\setminus F}$ and hence
\[
  N\le\ell(w_F)+\ell(w_{S\setminus F}).
\]
For a Weyl group, however,
\[
  N=|\Phi^+|,
  \qquad
  \ell(w_J)=|\Phi_J^+|
\]
\cite[Chapter~1]{Humphreys}. Since the root system is irreducible, the highest root has support $S$ \cite[Chapter~1]{Humphreys}. Because both $F$ and $S\setminus F$ are nonempty, this gives a positive root lying in neither parabolic subsystem, and therefore
\[
  |\Phi^+|>|\Phi_F^+|+|\Phi_{S\setminus F}^+|,
\]
a contradiction. Thus \eqref{eq:homology} produces at least two positive-degree homology classes when $F\subsetneq S$, which is incompatible with the homology of a sphere. Hence the only minimal nonface is $S$, so every proper subset of $S$ is a face and $\mathcal K=\partial\Delta^S$. This proves (i)$\Rightarrow$(iii).

Finally, (iii)$\Rightarrow$(ii) follows from the Castellana--Kitchloo identification with the unit adjoint sphere \cite[Theorem~1.1]{CK}.
\end{proof}

\begin{remark}
The irreducibility hypothesis is essential for this simple rigidity statement: in a product Coxeter system the equality case in the length argument can split across components.
\end{remark}

\section{Familiar specializations and applications}\label{sec:specializations}
The main formula becomes especially concrete in two classes familiar from topological combinatorics.  We first recover an ordinary polyhedral product from a product of rank-one groups, and then specialize to matroid independence complexes.
\subsection{Products of rank-one groups}
Let $G=(SU(2))^r$ and identify $S$ with $\{1,\ldots,r\}$. For $I\subseteq S$ one has
\[
  G/G_I\cong\prod_{j\notin I}S^2,
\]
and the map for $I\subset J$ is the projection that forgets the factors indexed by $J\setminus I$.

\begin{theorem}[Polyhedral-product identification]\label{thm:polyprod}
For every simplicial complex $\mathcal K\subseteq2^S$ there is a natural homotopy equivalence
\[
  \XK((SU(2))^r)\simeq(D^3,S^2)^{\mathcal K}.
\]
\end{theorem}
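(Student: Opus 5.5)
The plan is to compare two homotopy colimits over the same indexing category $\cat(\mathcal K)$. The polyhedral product $(D^3,S^2)^{\mathcal K}$ is, by the standard description (e.g.\ \cite{BBCG}), the colimit over $I\in\mathcal K$ of $D(I)=\prod_{j\in I}D^3\times\prod_{j\notin I}S^2$. All structure maps are inclusions of CW subcomplexes, so the diagram is cofibrant in the Reedy/projective sense. Consequently this colimit agrees with the homotopy colimit, $(D^3,S^2)^{\mathcal K}\simeq\hocolim_{I\in\mathcal K}D(I)$; this is the observation of Panov--Ray--Vogt \cite{PRV} that face-category diagrams of cofibrations have colim $\simeq$ hocolim. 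So it suffices to produce a natural transformation between $I\mapsto D(I)$ and $I\mapsto G/G_I$ that is an objectwise homotopy equivalence, and then invoke homotopy invariance of $\hocolim$.

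For $G=(SU(2))^r$ we have $G_I=\prod_{j\in I}SU(2)\times\prod_{j\notin I}T_j$, where $T_j$ is the circle in the $j$-th factor. Hence $G/G_I\cong\prod_{j\notin I}SU(2)/T_j\cong\prod_{j\notin I}S^2$, and $\pi_{I,J}$ forgets the coordinates in $J\setminus I$. Define $f_I:D(I)\to\prod_{j\notin I}S^2$ as the projection onto the $S^2$-factors indexed by $j\notin I$. For $I\subset J$, the inclusion $D(I)\hookrightarrow D(J)$ is the identity on the coordinates outside $J$ and sends the $S^2$-coordinates indexed by $J\setminus I$ into $D^3$. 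Composing with $f_J$ discards exactly those coordinates, so $f_J\circ\iota_{I,J}=\pi_{I,J}\circ f_I$ holds strictly. Each $f_I$ is a homotopy equivalence because it is a projection with contractible fibre $\prod_{j\in I}D^3$.

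The remaining step is assembly. Homotopy invariance of Bousfield--Kan homotopy colimits \cite[Chapter~XII]{BK} gives $\hocolim D\simeq\hocolim(G/G_\bullet)=\XK((SU(2))^r)$. Combining this with the identification $\hocolim D\simeq\operatorname{colim}D$ yields the theorem. Naturality in $\mathcal K$ holds because the construction is restriction along inclusions of face posets.

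The main obstacle is the colim--hocolim comparison. I would handle it directly. Either I verify the projective cofibrancy condition: the latching map $\operatorname{colim}_{I'<I}D(I')\to D(I)$ is the inclusion of $(D^3,S^2)^{\partial\Delta^I}\times\prod_{j\notin I}S^2$, a CW inclusion. Or I use the projection from the Bousfield--Kan model to the colimit and check by induction on faces (a gluing lemma) that it is a homotopy equivalence.

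As a sanity check, Theorem~\ref{thm:hochster} reproduces the BBCG formula. For $W=(\Z/2)^r$, $w$ ranges over subsets $D$, with $\ell(w)=|D|$ and $\DesR(w)=D$. This gives $\bigoplus_D\wtH_{n-2|D|-1}(\mathcal K_D)$, which matches the stable splitting of $(D^3,S^2)^{\mathcal K}$ into the summands $\Sigma^{2|D|+1}|\mathcal K_D|$.
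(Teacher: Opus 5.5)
Your proposal is correct and follows the paper's route. The paper uses the same auxiliary diagram $E(I)=\prod_{j\in I}CS^2\times\prod_{j\notin I}S^2$ and the same strictly natural objectwise equivalence $E\to(I\mapsto G/G_I)$ (collapsing the cone factors, i.e.\ your projection $f_I$), and it identifies $\hocolim E$ with $\operatorname{colim}E$ by the same latching-subcomplex observation. The only difference is the citation for that last step: the paper cites the Projection Lemma of \cite{WZZ}, whereas you cite \cite{PRV}.
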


\begin{proof}
Let $D(I)=\prod_{j\notin I}S^2$ with projection maps, so that $\XK((SU(2))^r)=\hocolim D$, and let $E(I)=\prod_{j\in I}CS^2\times\prod_{j\notin I}S^2$.
For $I\subset J$ use the standard inclusions $S^2\hookrightarrow CS^2$ in the new cone coordinates. Collapsing each cone factor to its cone point defines an objectwise homotopy equivalence of diagrams $E\to D$, and the squares commute strictly. Homotopy invariance of the Bousfield--Kan construction therefore gives $\hocolim E\simeq\hocolim D$ \cite[Chapter~XII]{BK}.

The diagram $E$ is a diagram of CW subcomplex inclusions. More precisely, for each face $I$ the latching union $\bigcup_{J\subsetneq I}E(J)\subseteq E(I)$ is the subcomplex in which at least one cone coordinate indexed by $I$ lies in the base $S^2\subset CS^2$. It is therefore a CW subcomplex, and its inclusion is a closed cofibration. Hence the Projection Lemma \cite[Lemma~4.5]{WZZ} identifies $\hocolim E$ with $\operatorname{colim}E$. That colimit is precisely the polyhedral product $(CS^2,S^2)^{\mathcal K}=(D^3,S^2)^{\mathcal K}$ \cite{BBCG}.
\end{proof}

In this case $W\cong(\Z/2)^r$: every $D\subseteq S$ is the descent set of a unique element $w_D$ and $\ell(w_D)=|D|$. Formula \eqref{eq:homology} therefore becomes the familiar induced-subcomplex grading for this polyhedral product, with suspension shift $2|D|+1$.

\subsection{Matroids and the Tutte polynomial}
Let $M$ be a matroid on $S$ and put $\mathcal K=\Ind(M)$. Then $\mathcal K_D=\Ind(M|D)$. The independence complex of a coloopless rank-$r$ matroid is homotopy equivalent to a wedge of $T_M(0,1)$ spheres of dimension $r-1$; if a coloop is present, the complex is contractible \cite{BjornerMatroid,ACS}.

\begin{corollary}[Coxeter--Tutte formula]\label{cor:tutte}
Let $\bbk$ be a field. With $r_M(D)$ denoting the rank of $D$ in $M$,
\[
  P_{X_{\Ind(M)}(G)}(t;\bbk)
  =\sum_{w\in W}
  T_{M|\DesR(w)}(0,1)
  t^{2\ell(w)+r_M(\DesR(w))}.
\]
The identity term is $1$, since the empty matroid has $T(0,1)=1$ and rank zero.
\end{corollary}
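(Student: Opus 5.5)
The plan is to obtain the formula directly from Corollary~\ref{cor:poincare}, or equivalently from \eqref{eq:homology} taken over $\bbk$, by evaluating each induced-subcomplex contribution with the matroid homotopy type. Fix $w\in W$ and put $D=\DesR(w)$. As noted above, $\mathcal K_D=\Ind(M)_D=\Ind(M|D)$, and $M|D$ is a matroid of rank $r_M(D)$. The task is to show that the $w$-summand contributes exactly $T_{M|D}(0,1)\,t^{2\ell(w)+r_M(D)}$ to the Poincar\'e series over $\bbk$.

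The computation splits into three cases. If $D=\varnothing$, then $\mathcal K_D=\{\varnothing\}$, so $\wtH_{-1}=\bbk$. This places a single class in degree $2\ell(w)+0=r_M(\varnothing)+2\ell(w)$, and it agrees with $T_{\varnothing}(0,1)=1$. If $D\neq\varnothing$ and $M|D$ is coloopless, the cited result says $\Ind(M|D)$ is a wedge of $T_{M|D}(0,1)$ spheres of dimension $r_M(D)-1$. Its reduced homology over $\bbk$ is therefore $\bbk^{T(0,1)}$, concentrated in degree $r_M(D)-1$. Under \eqref{eq:homology} this lands in total degree $2\ell(w)+(r_M(D)-1)+1$, as required.

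If $M|D$ has a coloop, the complex is contractible and contributes $0$, so I need $T_{M|D}(0,1)=0$. This follows from the deletion--contraction identity $T_N=x\,T_{N/e}$ for a coloop $e$, which gives $T_{M|D}(0,y)=0\cdot T_{(M|D)/e}(0,y)=0$. One edge case needs checking: a nonempty $D$ consisting only of loops. Then $M|D$ is coloopless of rank $0$, $\Ind(M|D)=\{\varnothing\}$, and the wedge of $(-1)$-spheres convention must agree with $\wtH_{-1}=\bbk$ and $T(0,1)=1$. This does hold, since $T=y^{|D|}$ gives the value $1$ at $y=1$.

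Summing over $w\in W$ gives the displayed series, since the field coefficients make dimensions additive over the direct sum. The identity term comes from $w=e$, the unique element with empty descent set. I do not expect a serious obstacle. The only delicate points are the coloop vanishing $T(0,1)=0$ and the degenerate rank-zero and loop cases, where the convention $\wtH_{-1}(\{\varnothing\})=\bbk$ has to be matched carefully with the Tutte evaluation.
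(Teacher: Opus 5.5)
Your proposal is correct and is essentially the paper's argument: it applies \eqref{eq:homology} over $\bbk$ to $\Ind(M)_D=\Ind(M|D)$, uses the wedge-of-$T_{M|D}(0,1)$-spheres homotopy type in the coloopless case, and uses $T_{M|D}(0,1)=0$ when $M|D$ has a coloop. Your separate checks of the empty and all-loops restrictions make explicit the rank-zero conventions that the paper's short proof leaves implicit.
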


\begin{proof}
Apply \eqref{eq:homology} to the restrictions $M|D$ and use the homotopy type of their independence complexes. If $M|D$ has a coloop, both sides contribute zero for that $D$ because $T_{M|D}(0,1)=0$.
\end{proof}

\begin{remark}[Type $A$]
For $W=S_{r+1}$, $\ell(w)$ is the inversion number and $\DesR(w)$ is the ordinary permutation descent set. Thus Corollary~\ref{cor:tutte} mixes inversion number, descent sets, matroid restrictions, ranks, and Tutte evaluations in one Poincar\'e-series formula.
\end{remark}

\section{The boundary simplex: adjoint spheres and bivectors}\label{sec:bivector}
We finally return to the example that motivated the construction.  For the boundary simplex the induced-subcomplex formula collapses to two homology classes, and the full chain model gives an explicit integral representative of the top class.
Return to a compact simply connected simple Lie group $G$ and specialize to $\mathcal K=\partial\Delta^S$. By Corollary~\ref{cor:schubertbar}, $\BK(W)$ is the Schubert cellular bar complex of the proper-parabolic diagram. Castellana and Kitchloo identify the corresponding Bousfield--Kan realization with $S(\mathfrak g)$ \cite[Theorem~1.1]{CK}. Since $\dim G=r+2\ell(w_0)$, Theorem~\ref{thm:perfect} gives the expected two-cell chain model in degrees $0$ and $\dim G-1$. The resulting higher-rank model complements the chain-and-diagram treatment of low-dimensional bivector orbit decompositions in \cite{companion}.

\begin{theorem}[Compatible proper-parabolic chain model]\label{thm:propermodel}
For every compact simply connected simple $G$, the proper-parabolic face diagram admits simultaneous Schubert cellular structures for which all projection maps are strict cellular chain maps. The resulting normalized bar complex has the descent decomposition \eqref{eq:boundarysplit} and the perfect reduction \eqref{eq:morse}.
\end{theorem}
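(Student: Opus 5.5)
The argument assembles results already established, specialized to $\mathcal K=\partial\Delta^S$; the only genuinely new point is the compatibility of cell structures.

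\emph{Step 1: the CW model.} Take the proper-parabolic face diagram $I\mapsto G/G_I$ for $I\subsetneq S$. We fix one Borel subgroup $B\subseteq G_{\C}$ and give every $G/G_I\cong G_{\C}/P_I$ its $B$-orbit Schubert decomposition $\coprod_{w\in W^I}C_w^I$. These structures are simultaneous: all of them come from the single choice of $B$, and $B\subseteq P_I\subseteq P_J$ for $I\subset J$. Hence the projection $\pi_{I,J}$ is $B$-equivariant and sends $B$-orbits onto $B$-orbits. In particular it maps the $k$-skeleton (a union of orbits of complex dimension at most $k/2$) into the $k$-skeleton, so each $\pi_{I,J}$ is strictly cellular and no cellular approximation is needed. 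Proposition~\ref{prop:projection} then identifies $(\pi_{I,J})_\#$ with $p_{I,J}$ on the nose. These are the \emph{strict cellular chain maps} asserted in the statement.

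\emph{Step 2: the chain complex.} Apply Lemma~\ref{lem:cellularrealization} and Corollary~\ref{cor:schubertbar} with $\mathcal K=\partial\Delta^S$. They show that the Bousfield--Kan realization has a CW structure whose cellular chain complex is exactly $\BK(W)$. By \cite[Theorem~1.1]{CK}, the realization of this diagram is $S(\mathfrak g)$.

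\emph{Step 3: decomposition and reduction.} For the structural statements, specialize Theorem~\ref{thm:exactsplit} to $\mathcal K=\partial\Delta^S$. This gives \eqref{eq:boundarysplit}, since $A_{\partial\Delta^S}=K_S$ and $L_{\partial\Delta^S,D}=L_D$ directly from the definitions. Theorem~\ref{thm:perfect}, which rests on Lemma~\ref{lem:booleanmatching} and on the uniqueness of $e$ and $w_0$ as the elements with descent sets $\varnothing$ and $S$, then gives the perfect reduction \eqref{eq:morse}. As a consistency check, note $\dim G=r+2N$, so the top critical degree $2N+r-1=\dim G-1$ matches $S(\mathfrak g)$, as it should.

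\emph{Main obstacle.} The only delicate point is the sign of $(\pi_{I,J})_\#$ on the cells $C_w^I$ with $w\in W^J$. I would argue as in Proposition~\ref{prop:projection}: the restriction $C_w^I\to C_w^J$ is a biholomorphism of affine spaces, so it preserves complex orientations and has degree $+1$. Independence from the chain of faces along which we project follows from functoriality of the $p_{I,J}$ together with uniqueness of the parabolic factorization. Everything else is a direct citation of the results above.
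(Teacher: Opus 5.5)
Your proposal is correct and follows the paper's proof, which simply combines Proposition~\ref{prop:projection}, Corollary~\ref{cor:schubertbar}, Theorem~\ref{thm:exactsplit} and Theorem~\ref{thm:perfect}, specialized to $\mathcal K=\partial\Delta^S$. Your added observation is that one fixed Borel subgroup $B\subseteq P_I\subseteq P_J$ makes every projection $B$-equivariant and hence strictly cellular. This spells out the compatibility the paper takes as part of its setup in Section~\ref{sec:schubert}.
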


\begin{proof}
Combine Proposition~\ref{prop:projection}, Corollary~\ref{cor:schubertbar}, Theorem~\ref{thm:exactsplit}, and Theorem~\ref{thm:perfect}.
\end{proof}

\begin{corollary}[Fundamental class of the adjoint sphere]
Under the Castellana--Kitchloo homeomorphism, the class of $\Omega_W$ in \eqref{eq:topcycle} maps, up to orientation sign, to the fundamental class of $S(\mathfrak g)$.
\end{corollary}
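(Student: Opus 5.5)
The claim is that the explicit cycle $\Omega_W$ of Corollary~\ref{cor:topcycle} is carried to the fundamental class of $S(\mathfrak g)$. I would argue purely homologically. Since \cite[Theorem~1.1]{CK} is a homeomorphism (or at least a homotopy equivalence) $\Phi$ from the Bousfield--Kan realization of the proper-parabolic diagram to $S(\mathfrak g)$, the induced map $\Phi_*$ is an isomorphism on integral homology. So it suffices to show that $[\Omega_W]$ generates $H_{2N+r-1}$ of the realization, and that this degree equals $\dim S(\mathfrak g)$.

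\textbf{Step 1 (degree).} Use $\dim G=r+2N$, which comes from the root decomposition $\dim\mathfrak g=\operatorname{rank}+2|\Phi^+|$ together with $N=|\Phi^+|$. This gives $\dim S(\mathfrak g)=2N+r-1$, matching the degree of $\Omega_W$.

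\textbf{Step 2 (homology of the realization).} By Corollary~\ref{cor:schubertbar}, the cellular chains of the realization are $\BK(W)$. Corollary~\ref{cor:boundaryhomology}, or equivalently the perfect Morse reduction in Theorem~\ref{thm:perfect}, shows that $H_{2N+r-1}(\BK(W);\Z)\cong\Z$.

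\textbf{Step 3 (the generator).} Corollary~\ref{cor:topcycle} says that $\Omega_W$ is a cycle generating this group up to sign. To justify this, I would trace it through the splitting \eqref{eq:boundarysplit}. The element $\Omega_W$ lies in the $w_0$-summand $\Sigma^{2N}C_*(K_S,L_S)$. There it is the cone with apex $\varnothing$ on the barycentric fundamental cycle of $\sd\partial\Delta^{r-1}$. Its relative boundary is that fundamental cycle, which lies in $L_S$ and so vanishes in the quotient. The connecting isomorphism $H_{r-1}(K_S,L_S)\cong\wtH_{r-2}(L_S)\cong\Z$ sends $[\Omega_W]$ to the fundamental class, which is a generator.

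\textbf{Step 4 (conclusion).} $H_{2N+r-1}(S(\mathfrak g);\Z)\cong\Z$ is generated by the fundamental class $[S(\mathfrak g)]$. Any isomorphism $\Z\to\Z$ sends a generator to $\pm$ a generator, so $\Phi_*[\Omega_W]=\pm[S(\mathfrak g)]$.

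The main obstacle is Step 3, specifically the signs in the cone-on-fundamental-cycle identification. I would first verify that $\sum_{\sigma}\operatorname{sgn}(\sigma)[I_1^\sigma<\cdots<I_{r-1}^\sigma]$ is the standard fundamental cycle of the barycentric subdivision of $\partial\Delta^{r-1}$. The flags $I_1^\sigma<\cdots<I_{r-1}^\sigma$ with $I_{r-1}^\sigma=S\setminus\{s_{\sigma(r)}\}$ are exactly the top simplices of $\sd\partial\Delta^{r-1}$. Interior codimension-one faces cancel in pairs because $\sigma$ and $\sigma\circ(j\,j{+}1)$ have opposite signs. In the bar differential \eqref{eq:bardiff}, the zeroth face is $p_{\varnothing,I_1}(e_{w_0})=0$, since $w_0\notin W^{I_1}$. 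The remaining deleted-vertex faces cancel by the same pairing, and there is no face deleting the last entry $I_{r-1}$ because nothing follows it.

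Since only a sign ambiguity is claimed, orientation conventions for $\Phi$ do not need to be fixed.
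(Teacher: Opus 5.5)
Your argument is correct and is essentially the paper's: $[\Omega_W]$ generates $H_{2N+r-1}\cong\Z$ by Corollary~\ref{cor:topcycle}, and the Castellana--Kitchloo homeomorphism carries this group isomorphically onto the top homology of the oriented sphere $S(\mathfrak g)$ of dimension $\dim G-1=2N+r-1$, so a generator goes to $\pm[S(\mathfrak g)]$. One slip in your extra verification in Step~3: \eqref{eq:bardiff} does contain the face deleting the last entry $I_{r-1}$, since the sum runs up to $j=q$; this face cancels under the same sign-reversing pairing $\sigma\leftrightarrow\sigma\circ(r{-}1\;r)$, so your conclusion that $\Omega_W$ is a cycle still stands.
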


\begin{proof}
The class is a generator of the unique top integral homology group by Corollary~\ref{cor:topcycle}, and the realization is an oriented sphere.
\end{proof}

\begin{corollary}[Bivector spheres for $\operatorname{Spin}(n)$]\label{cor:spin}
For $n\ge6$, take $G=\operatorname{Spin}(n)$ and use the standard identifications $\mathfrak{spin}(n)\cong\mathfrak{so}(n)\cong\Lambda^2\R^n$ \cite[Chapter~II]{LM}. Then the complete proper-parabolic face diagram of the unit bivector sphere $S(\Lambda^2\R^n)=S^{\binom n2-1}$ has a strict Schubert cellular bar complex with a perfect integral Morse reduction to two critical generators. The reduction is controlled by right descent sets in the Weyl group of type $B_{(n-1)/2}$ for odd $n$ and $D_{n/2}$ for even $n$, with $D_3\cong A_3$ when $n=6$ \cite[Chapters~2--3]{Humphreys}.
\end{corollary}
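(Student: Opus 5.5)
The statement to prove is Corollary~\ref{cor:spin}. Since $\operatorname{Spin}(n)$ for $n\ge6$ is a special case of a compact simply connected simple group, the plan is to specialize Theorem~\ref{thm:propermodel} and identify the ingredients. There are three steps.

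\textbf{First}, I will check the hypotheses of the general results. For $n\ge3$, $\operatorname{Spin}(n)$ is compact, connected and simply connected. For $n\ge5$ it is simple; excluding $n=4$ avoids $\mathfrak{so}(4)\cong\mathfrak{su}(2)\oplus\mathfrak{su}(2)$. The Lie algebra is $\mathfrak{spin}(n)\cong\mathfrak{so}(n)$. The standard isomorphism $\mathfrak{so}(n)\cong\Lambda^2\R^n$ sends $u\wedge v$ to $x\mapsto\langle u,x\rangle v-\langle v,x\rangle u$. It is $\mathrm{SO}(n)$-equivariant, hence $\operatorname{Spin}(n)$-equivariant through the covering, and it carries an invariant inner product to a positive multiple of the standard one. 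So the unit adjoint sphere $S(\mathfrak g)$ is identified equivariantly with $S(\Lambda^2\R^n)$, which has dimension $\binom n2-1$. As a consistency check, $\dim G=\binom n2$, and Theorem~\ref{thm:perfect} places the top critical generator in degree $2N+r-1=\dim G-1$.

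\textbf{Second}, I will invoke Castellana--Kitchloo \cite[Theorem~1.1]{CK}, transported along this equivariant identification. This says the unit bivector sphere is the Bousfield--Kan realization of the proper-parabolic diagram $I\mapsto G/G_I$ with $I\subsetneq S$. Theorem~\ref{thm:propermodel} then gives simultaneous Schubert cell structures, the strict cellular bar complex $\BK(W)$, and the perfect integral Morse reduction \eqref{eq:morse} to the two generators $[e;\varnothing]$ and $\xi_{w_0}$. By the proof of Theorem~\ref{thm:perfect}, this reduction is assembled summand by summand over $w$, and each block depends only on $\DesR(w)$ via Lemma~\ref{lem:booleanmatching}. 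That is the precise sense in which it is ``controlled by right descent sets''.

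\textbf{Third}, I will identify the Weyl group. For odd $n=2k+1$ the root system of $\mathfrak{so}(2k+1)$ is $B_k$. For even $n=2k$ it is $D_k$. For $n=6$ we have $k=3$ and $D_3\cong A_3$, matching $\mathfrak{so}(6)\cong\mathfrak{su}(4)$. This is a citation to \cite{Humphreys}.

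The only point needing care is the first step: making sure the Castellana--Kitchloo sphere really is $S(\Lambda^2\R^n)$ as a space with the same diagram, not merely abstractly a sphere. The fix is to compose their homeomorphism with the equivariant linear isometry $\mathfrak g\cong\Lambda^2\R^n$, rescaled so that it is norm-preserving. The parabolic diagram is defined intrinsically from $G$, so it is unchanged by this identification.
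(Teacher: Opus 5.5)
Your proposal is correct and follows the route the paper intends. The paper gives no separate proof: the corollary is meant to follow by specializing Theorem~\ref{thm:propermodel} to $G=\operatorname{Spin}(n)$, transporting the Castellana--Kitchloo identification along the equivariant isometry $\mathfrak{spin}(n)\cong\Lambda^2\R^n$, and reading off the Weyl type $B_{(n-1)/2}$ or $D_{n/2}$, which is what you do. Your checks on the normalization of the invariant inner product and on the top degree $2N+r-1=\dim G-1$ agree with the remarks at the start of Section~\ref{sec:bivector}.
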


\section{Concluding remarks}
The functor $\mathcal K\mapsto\XK(G)$ places the proper-parabolic adjoint sphere in a larger family of parabolic homotopy colimits. Its normalized bar differential decomposes by Weyl-group element, and the $w$-summand reduces integrally to a suspension of the augmented chains of $\mathcal K_{\DesR(w)}$. Over a field, Hochster's formula identifies these summands with squarefree multigraded Betti pieces of $\bbk[\mathcal K]$. Functoriality and simple connectivity give the homotopy-detection theorem, while complementary descent sets and minimal nonfaces give the Alexander symmetry and sphere rigidity results. The rank-one product and matroid cases connect the same decomposition with polyhedral products and Tutte polynomials.

The results here are additive. Natural further questions concern the cup product on $\XK(G)$, equivariant refinements, and filtrations of Coxeter complexes suggested by the descent summands. These questions require information beyond the cellular differential studied here.

\appendix
\section{Cell enumeration for the boundary simplex}\label{sec:enumerator}
Return to the boundary-simplex indexing complex. Let
\[
  \Phi_m(z)=\sum_{k=1}^m k!\begin{Bmatrix}m\\k\end{Bmatrix}z^{k-1},
  \qquad \Phi_0(z)=0,
\]
where $\begin{Bmatrix}m\\k\end{Bmatrix}$ is a Stirling number of the second kind, and define
\[
  \Psi_{r,d}(z)=\sum_{j=0}^{r-d}\binom{r-d}{j}\Phi_{d+j}(z).
\]
\begin{proposition}[Bigraded cell enumerator]\label{prop:enumerator}
Let
\[
  F_W(q,z)=\sum_{w\in W}\sum_{a\ge0}
  \#\{\text{bar cells labelled by }w\text{ of bar degree }a\}
  q^{\ell(w)}z^a.
\]
Then
\begin{equation}\label{eq:enumerator}
  F_W(q,z)=\sum_{w\in W}q^{\ell(w)}\Psi_{r,\desR(w)}(z).
\end{equation}
Equivalently,
\[
  F_W(q,z)=\sum_{I\subsetneq S}\left(\sum_{w\in W^I}q^{\ell(w)}\right)\Phi_{r-|I|}(z).
\]
\end{proposition}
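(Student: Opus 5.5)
We must count bar cells $[w;I_0<\cdots<I_q]$ with $I_j\subsetneq S$, $w\in W^{I_0}$, and bar degree $a=q$. The plan is to count them in two ways, once grouping by the first face $I_0$ and once by the Weyl label $w$. The basic combinatorial input is a count of strict chains in the Boolean lattice.

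First, for fixed $I_0=I$ with $|I|=r-m$, a strict chain $I=I_0<I_1<\cdots<I_q$ of proper subsets of $S$ corresponds to an ordered sequence of $q$ nonempty disjoint blocks $I_1\setminus I_0,\dots,I_q\setminus I_{q-1}$. Their union lies in $S\setminus I$, a set of size $m$, and must miss at least one element, since $I_q\neq S$. Adjoining the nonempty leftover $S\setminus I_q$ as a final block gives a bijection with ordered set partitions of $S\setminus I$ into $q+1$ blocks. These are counted by $(q+1)!\left\{\begin{smallmatrix}m\\q+1\end{smallmatrix}\right\}$. With $k=q+1$ this gives exactly $\Phi_m(z)=\sum_k k!\left\{\begin{smallmatrix}m\\k\end{smallmatrix}\right\}z^{k-1}$ as the $z$-enumerator of chains starting at $I$. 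The convention $\Phi_0=0$ correctly handles $I=S$, which is not a face. Summing over $I\subsetneq S$ and over labels $w\in W^I$ then yields the second displayed formula,
\[
F_W(q,z)=\sum_{I\subsetneq S}\Bigl(\sum_{w\in W^I}q^{\ell(w)}\Bigr)\Phi_{r-|I|}(z).
\]

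Second, we exchange the order of summation. By \eqref{eq:backgroundcriterion}, $w\in W^I$ if and only if $I\cap\DesR(w)=\varnothing$, so the formula becomes $\sum_w q^{\ell(w)}\sum_{I\subseteq S\setminus\DesR(w)}\Phi_{r-|I|}(z)$; the restriction $I\neq S$ may be dropped because $\Phi_0=0$. Put $d=\desR(w)$. A subset $I\subseteq S\setminus\DesR(w)$ of size $r-d-j$ leaves $r-|I|=d+j$, and there are $\binom{r-d}{j}$ such subsets. Hence the inner sum equals $\sum_{j=0}^{r-d}\binom{r-d}{j}\Phi_{d+j}(z)=\Psi_{r,d}(z)$, which is \eqref{eq:enumerator}.

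The only step requiring care is the chain-to-ordered-partition bijection, specifically the endpoint bookkeeping. The top face must be proper, which forces the final leftover block to be nonempty. The first face $I_0$ is allowed to be empty, so $\varnothing$ appears as a face. Beyond that, everything is reindexing.
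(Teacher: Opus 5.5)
Your proof is correct and follows the paper's argument. You use the same chain-to-ordered-partition bijection to get $\Phi_{r-|I_0|}(z)$, then pass between the two formulas by exchanging sums via $w\in W^I\iff I\cap\DesR(w)=\varnothing$ and grouping the subsets of $S\setminus\DesR(w)$ by size; the only difference is that you derive the parabolic form first, whereas the paper derives the descent form first.
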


\begin{proof}
For fixed $w$, the bar cells are the relative simplices of $(K_S,L_{\DesR(w)})$. A strict chain of bar degree $a$ beginning at a fixed $I_0$ is equivalent to an ordered partition of the $r-|I_0|$ elements outside $I_0$ into $a+1$ nonempty blocks, the last block being the complement of the terminal proper subset. This gives the factor $\Phi_{r-|I_0|}(z)$. Summing over $I_0\subseteq S\setminus\DesR(w)$ and grouping by the number of omitted elements gives $\Psi_{r,\desR(w)}(z)$. Reversing the two finite sums gives the parabolic form.
\end{proof}

\begin{corollary}[Descent-wise cancellation polynomials]\label{cor:cancellationpoly}
For every $0\le d\le r$ there is a polynomial $\Theta_{r,d}(z)\in\mathbb N[z]$ such that
\[
\Psi_{r,d}(z)=
\begin{cases}
1+(1+z)\Theta_{r,0}(z),&d=0,\\
(1+z)\Theta_{r,d}(z),&0<d<r,\\
z^{r-1}+(1+z)\Theta_{r,r}(z),&d=r.
\end{cases}
\]
The coefficients of $\Theta_{r,d}$ count matched unit pairs by lower bar degree for the matching of Lemma~\ref{lem:booleanmatching}.
\end{corollary}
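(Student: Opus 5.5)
We prove the corollary by computing $\Psi_{r,d}(z)$ as the bar-degree generating polynomial of all relative simplices of $(K_S,L_D)$ with $|D|=d$. By Proposition~\ref{prop:enumerator}, its coefficient of $z^a$ is the number of strict chains $I_0<\cdots<I_a$ of proper subsets with $I_0\cap D=\varnothing$. The matching of Lemma~\ref{lem:booleanmatching} is a unit acyclic matching on this based relative complex. Each matched pair consists of two generators in adjacent degrees $a$ and $a+1$. Define $\theta_{r,d,a}$ to be the number of matched pairs whose lower element has degree $a$, and set $\Theta_{r,d}(z)=\sum_a\theta_{r,d,a}z^a\in\mathbb N[z]$. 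A matched pair then contributes $z^a+z^{a+1}=z^a(1+z)$ to the generating polynomial. Since every cell is either critical or in exactly one pair,
\[
  \Psi_{r,d}(z)=\sum_{\text{critical }c}z^{\deg c}+(1+z)\Theta_{r,d}(z).
\]
This identity also gives the stated combinatorial interpretation of the coefficients of $\Theta_{r,d}$.

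It remains to read off the critical cells from Lemma~\ref{lem:booleanmatching}. For $d=0$ there is one critical $0$-simplex, contributing $1$. For $0<d<r$ there are none. For $d=r$ there is one critical $(r-1)$-simplex, contributing $z^{r-1}$. These are exactly the three cases of the statement.

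The main point to check is that the matching of Lemma~\ref{lem:booleanmatching} really is a matching on the original relative simplices of $(K_S,L_D)$, and not on the reduced complex produced after the cone and closure reductions. If it is not, counting pairs against $\Psi_{r,d}$ would be illegitimate. For $D=\varnothing$ and $D=S$ the matching is stated directly on simplices, so there is no issue. For $0<d<r$ the lemma composes reductions, so I would handle it by composing the matchings. The cone matching (Lemma~\ref{lem:conereduction}) and the lifted closure-collapse matching (Lemma~\ref{lem:descentclosure}) pair genuine simplices. The further cone matching on $\sd$ of a simplex, together with the apex pair, is then transported back to genuine simplices via the identification of the remaining critical cells with simplices of $\sd\mathcal K_D$ and the apex. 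The union of these pairings partitions the relative simplices into pairs of adjacent degrees, which is all the counting argument needs. Acyclicity is not needed for the count itself.

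As a sanity check I would evaluate at $z=-1$. This gives $\Psi_{r,d}(-1)=\chi$ of the relative pair, which should equal $1$, $0$, and $(-1)^{r-1}$ in the three cases; this is consistent with Corollary~\ref{cor:boundaryhomology}.
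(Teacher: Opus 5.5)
Your proposal is correct and follows the same route as the paper: you identify $\Psi_{r,d}(z)$ as the bar-degree cell count of the relative Boolean summand, observe that each matched pair contributes $z^a(1+z)$, and read off the critical cells from Lemma~\ref{lem:booleanmatching}. For $0<d<r$ you also check that the composed cone, closure and apex pairings pair genuine relative simplices in adjacent degrees. The paper leaves this point implicit, and you resolve it correctly.
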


\begin{proof}
Apply the cell-count identity to the Morse matching in each relative Boolean summand. Each matched pair contributes one cell in adjacent bar degrees, hence a factor $1+z$, while the critical cells are exactly those listed in Lemma~\ref{lem:booleanmatching}.
\end{proof}

\section*{Acknowledgements}
This work was supported by the REFRESH project--Research Excellence For REgion Sustainability and High-tech Industries, project No.~CZ.10.03.01/00/22\_003/0000048, via the Operational Programme Just Transition, and by the University of Ostrava, Grant No.~SGS05/P\v RF/2026.


\begin{thebibliography}{99}
\bibitem{AR} A.~Adams and V.~Reiner, A colorful Hochster formula and universal parameters for face rings, \emph{J. Commut. Algebra} 15 (2023), no.~2, 151--176. DOI: 10.1216/jca.2023.15.151.

\bibitem{ACS} F.~Ardila, F.~Castillo, and J.~A.~Samper, The topology of the external activity complex of a matroid, \emph{Electron. J. Combin.} 23 (2016), no.~3, Paper P3.8. DOI: 10.37236/5042.

\bibitem{BBCG} A.~Bahri, M.~Bendersky, F.~R.~Cohen, and S.~Gitler, The polyhedral product functor: a method of computation for moment-angle complexes, arrangements and related spaces, \emph{Adv. Math.} 225 (2010), 1634--1668. DOI: 10.1016/j.aim.2010.03.026. Preprint: arXiv:0711.4689.

\bibitem{BjornerMatroid} A.~Bj\"orner, The homology and shellability of matroids and geometric lattices, in \emph{Matroid Applications}, N.~White (ed.), Encyclopedia of Mathematics and its Applications 40, Cambridge University Press, Cambridge, 1992, 226--283.

\bibitem{BB} A.~Bj\"orner and F.~Brenti, \emph{Combinatorics of Coxeter Groups}, Graduate Texts in Mathematics 231, Springer, New York, 2005.

\bibitem{BK} A.~K.~Bousfield and D.~M.~Kan, \emph{Homotopy Limits, Completions and Localizations}, Lecture Notes in Mathematics 304, Springer, Berlin, 1972.

\bibitem{Brion} M.~Brion, Lectures on the geometry of flag varieties, in \emph{Topics in Cohomological Studies of Algebraic Varieties}, Trends in Mathematics, Birkh\"auser, 2005, 33--85. DOI: 10.1007/3-7643-7342-3\_2.

\bibitem{CK} N.~Castellana and N.~Kitchloo, A homotopy construction of the adjoint representation for Lie groups, \emph{Math. Proc. Cambridge Philos. Soc.} 133 (2002), no.~3, 399--409. DOI: 10.1017/S0305004102005947.

\bibitem{Chari} M.~K.~Chari, On discrete Morse functions and combinatorial decompositions, \emph{Discrete Math.} 217 (2000), no.~1--3, 101--113. DOI: 10.1016/S0012-365X(99)00258-7.

\bibitem{CCK} S.~Cho, S.~Choi, and S.~Kaji, Geometric representations of finite groups on real toric spaces, \emph{J. Korean Math. Soc.} 56 (2019), no.~5, 1265--1283. DOI: 10.4134/JKMS.j180646.

\bibitem{DJV} T.~Douvropoulos and M.~Josuat--Verg\`es, Homology character of the parabolic coset poset, arXiv:2509.11905, 2025.

\bibitem{FanWang} F.~Fan and X.~Wang, On the cohomology of moment-angle complexes associated to Gorenstein* complexes, arXiv:1508.00159, 2015, revised 2016.

\bibitem{GS} J.~Grbi\'c and M.~Staniforth, Duality in toric topology, in \emph{Toric Topology and Polyhedral Products}, Fields Institute Communications 89, Springer, Cham, 2024, 137--147. DOI: 10.1007/978-3-031-57204-3\_8.

\bibitem{Hatcher} A.~Hatcher, \emph{Algebraic Topology}, Cambridge University Press, Cambridge, 2002.

\bibitem{Hochster} M.~Hochster, Cohen--Macaulay rings, combinatorics, and simplicial complexes, in \emph{Ring Theory II (Proc. Second Oklahoma Conference)}, B.~R.~McDonald and R.~Morris (eds.), Lecture Notes in Pure and Applied Mathematics 26, Marcel Dekker, New York, 1977, 171--223.

\bibitem{Humphreys} J.~E.~Humphreys, \emph{Reflection Groups and Coxeter Groups}, Cambridge Studies in Advanced Mathematics 29, Cambridge University Press, Cambridge, 1990.

\bibitem{JW} M.~J\"ollenbeck and V.~Welker, \emph{Minimal Resolutions via Algebraic Discrete Morse Theory}, Memoirs of the American Mathematical Society 197 (2009), no.~923. DOI: 10.1090/memo/0923.

\bibitem{KT} D.~Kishimoto and M.~Takeda, Torsion in the space of commuting elements in a Lie group, \emph{Canad. J. Math.} 76 (2024), no.~3, 1033--1061. DOI: 10.4153/S0008414X23000317.

\bibitem{Kitchloo} N.~Kitchloo, On the topology of Kac--Moody groups, \emph{Math. Z.} 276 (2014), no.~3--4, 727--756. DOI: 10.1007/s00209-013-1220-3.

\bibitem{KozlovCollapse} D.~N.~Kozlov, Collapsing along monotone poset maps, \emph{Int. J. Math. Math. Sci.} 2006, Article ID 79858. DOI: 10.1155/IJMMS/2006/79858.

\bibitem{KozlovMorse} D.~N.~Kozlov, Discrete Morse theory for free chain complexes, \emph{C. R. Math. Acad. Sci. Paris} 340 (2005), no.~12, 867--872. DOI: 10.1016/j.crma.2005.04.036.

\bibitem{LM} H.~B.~Lawson, Jr. and M.-L.~Michelsohn, \emph{Spin Geometry}, Princeton Mathematical Series 38, Princeton University Press, Princeton, NJ, 1989.

\bibitem{LS} I.~Yu.~Limonchenko and G.~D.~Solomadin, On the homotopy decomposition for the quotient of a moment-angle complex and its applications, \emph{Proc. Steklov Inst. Math.} 317 (2022), 117--140. DOI: 10.1134/S0081543822020067.

\bibitem{PRV} T.~Panov, N.~Ray, and R.~Vogt, Colimits, Stanley--Reisner algebras, and loop spaces, in \emph{Categorical Decomposition Techniques in Algebraic Topology}, Progress in Mathematics 215, Birkh\"auser, Basel, 2004, 261--291. DOI: 10.1007/978-3-0348-7863-0\_15.

\bibitem{Petersen} T.~K.~Petersen, A two-sided analogue of the Coxeter complex, \emph{Electron. J. Combin.} 25 (2018), no.~4, Paper P4.64. DOI: 10.37236/8015.

\bibitem{companion} B.~Schneider, D.~Schneiderov\'a (Barseghyan), and Y.~Zhang, A chain- and diagram-level semantics for morphological calculus refinement, monodromy, and bivector orbit decompositions, arXiv:2608.11325 [math.AG], 2026.

\bibitem{WZZ} V.~Welker, G.~M.~Ziegler, and R.~T.~\v Zivaljevi\'c, Homotopy colimits---comparison lemmas for combinatorial applications, \emph{J. Reine Angew. Math.} 509 (1999), 117--149. DOI: 10.1515/crll.1999.509.117.
\end{thebibliography}
\end{document}